\documentclass[12pt]{article}   	% use "amsart" instead of "article" for ACSLaTeX format
\usepackage{geometry}               % See geometry.pdf to learn the layout options. There are lots.
\usepackage{graphicx}				% Use pdf, png, jpg, or epsÂ§ with pdflatex; use eps in DVI mode
\usepackage{amssymb, amsthm, amsmath, mathtools}
\usepackage{array, caption, csquotes, enumitem, float, makecell, physics, subcaption}
\MakeOuterQuote{"}
\newtheorem*{theorem*}{Theorem}

\newtheorem{theorem}{Theorem}[section]
\newtheorem{proposition}[theorem]{Proposition}
\newtheorem{lemma}[theorem]{Lemma}
\newtheorem{remark}[theorem]{Remark}

\newtheorem*{conjecture*}{Conjecture}

\newcommand{\mbf}{\mathbf}
\newcommand{\mbb}{\mathbb}

\newcommand{\R}{\mbb{R}}
\newcommand{\C}{\mbb{C}}

\title{Multi-Sink Solutions to\\ the Self-Similar Euler Equations}
\author{Hyungjun Choi, Matei P. Coiculescu}
\begin{document}
\maketitle
\begin{abstract}
We construct examples and provide a classification of self-similar solutions to the two-dimensional incompressible Euler equations whose pseudo-velocity fields possess more than one stagnation point. These solutions are also homogeneous steady states of the Euler equations. In contrast, we prove that any homogeneous self-similar solution with bounded vorticity away from the origin necessarily admits only a single stagnation point, located at the origin. The solutions we construct develop velocity cusps along rays from the origin, and this allows for additional stagnation points of the pseudo-velocity field.
\end{abstract}
\section{Introduction}
We consider the two-dimensional incompressible Euler equations:
\begin{equation}
\label{eq:EULER}
\left\{\begin{array}{r}
\partial_t u+ (u\cdot \nabla)u + \nabla p = 0,\\
u(t=0) = u_0.
\end{array} \right.
\end{equation}
One solves for the velocity vector field $u(x,t)$ and scalar pressure field $p(x,t)$ starting from initial data $u_0(x)$. The incompressible Euler equations in two-dimensions model the motion of inviscid fluids in thin plates. Due to a transport structure of the equations, the vorticity, $\textrm{curl }u$, is transported by the velocity $u$, which facilitates the existence theory for weak solutions starting from quite irregular initial data (for example, the existence for $L^1 \cap L^p$ vorticity was proven in \cite{DM}). On the other hand, uniqueness of solutions is only known when the vorticity is bounded, and non-uniqueness is only known (by convex integration arguments) due to a lack of energy conservation for $C^{1/3^-}$ velocity. More recently, the authors of \cite{BCK} used convex integration to prove non-uniqueness for $L^{1+}$ vorticity data on $\mbb{T}^2$, but these are not solutions in the usual distributional sense. The question whether unbounded and integrable ($L^1 \cap L^p$) vorticity data guarantees the uniqueness of weak solutions is open and referred to as the "Yudovich Problem".

Besides convex integration, another promising strategy for proving that there exist non-unique solutions to the Euler equations proceeds via the construction of self-similar solutions. We say that a vorticity profile $\Omega$ is a {\textit{self-similar profile}} to the incompressible Euler equations if and only if $\Omega$ satisfies the following system of equations:
\begin{equation} \label{eq:ssEE} \left\{ \begin{array}{r}
    (\nabla^\perp \Psi - \frac{1}{\alpha} \xi) \cdot \nabla_\xi \Omega = \Omega, \\
    \Delta \Psi = \Omega.
\end{array} \right. \end{equation}
Here we denoted $\nabla^\perp f = (-\partial_y f, \partial_x f)$, so the corresponding stream function profile is $\Psi$, the velocity profile is $U = \nabla^\perp \Psi$, and we have $\textrm{curl } U = \Omega$. If $\Omega$ is a self-similar profile solving Equation \eqref{eq:ssEE}, then 
\begin{equation*}
\omega(x,t):= t^{-1} \Omega(x t^{-1/\alpha}),
\end{equation*}
is a time-dependent self-similar solution of Equation (\ref{eq:EULER}). The parameter $\alpha \in (0,2)$ determines the scaling of the solution. Let us consider polar coordinates $(\rho,\theta)$ in the self-similar variables. Suppose that the profile $\Omega$ has the following asymptotic behavior at $\rho\to \infty$ (corresponding to $|\xi|\to \infty$):
$$\lim_{\rho\to\infty}\frac{\Omega(\rho,\theta)}{\rho^{-\alpha}} = g(\theta),$$
where $g(\theta)$ is a function of the polar angle only. Then we observe:
$$\lim_{t\to 0} \omega(x,t) = \lim_{t\to 0} \frac{t^{-1}\Omega(xt^{-1/\alpha})|xt^{-1/\alpha}|^{-\alpha}}{|xt^{-1/\alpha}|^{-\alpha}}= g(\theta)|x|^{-\alpha}.$$
Therefore, any self-similar solution to the Euler equations should arise from homogeneous initial vorticity data. Moreover, the initial data should be determined by the asymptotics of the self-similar profile at infinity.

Bifurcation in the self-similar variables is believed to be a plausible non-uniqueness mechanism for the Euler equations, especially since Vishik used a similar scenario to prove the non-uniqueness of solutions with time-integrable forcing in \cite{V1} and \cite{V2} (a result further refined by the authors of \cite{ABCDGJK}). We now describe the bifurcation behavior necessary for non-unique solutions to arise. If one can construct two different self-similar profiles with the same "data at infinity" in the self-similar variables, then one has proven the existence of two different self-similar solutions to the Euler equations from the same homogeneous data.

However, we remark that addressing the $L^1 \cap L^p$ Yudovich problem using this approach requires truncating the $(-\alpha)$-homogeneous vorticity data away from infinity, which seems quite difficult to resolve. Indeed, overcoming this difficulty appears to require a deep understanding of the linearization of the Euler equations in the space $L^{2/\alpha, \infty}$. We refer the reader to \cite{BC} for further discussions on this point. 

We have argued the importance of self-similar solutions to the Euler equations for the question of non-uniqueness. Homogeneous solutions to the Euler equations, like the power-law vortex with vorticity $\Omega(\xi) = |\xi|^{-\alpha}$, are self-similar solutions to the Euler equations that are also stationary in the "physical coordinates". Most of the qualitative properties of stationary homogeneous solutions to the two-dimensional Euler equations have been catalogued in the work \cite{LS} of Luo and Shvydkoy.

Perhaps one of the most intuitive and numerically validated mechanisms for non-uniqueness is the following bifurcation behavior: from an initial data of vorticity that is homogeneous, supported on two antipodal regions in angle, and symmetric with respect to rotation by $\pi$, either a single vortex spiral forms at the center of symmetry or two vortex spirals form with centers away from the center of symmetry. This bifurcation behavior was claimed to be observed numerically by the authors of \cite{BS} and discussed from the "pen and paper" point of view in \cite{BM}.

The single spiral solution has been studied with mathematical rigor by several authors. Elling first proved in \cite{E} the existence of self-similar solutions to the Euler equations from data that are small perturbations of the power-law vortex and $m$-fold rotationally symmetric with large $m$. His result was later refined by the authors of \cite{C} and \cite{SWZ}. While the results in \cite{E}, \cite{E2}, \cite{SWZ}, and \cite{C} are technically impressive, they leave the question of how to approach the Yudovich problem completely open. In particular, the non-uniqueness scenario proposed in \cite{BS} and \cite{BM} requires the existence of a self-similar solution to the Euler equations whose pseudo-velocity has multiple stagnation points. On the other hand, all the previous constructions of self-similar spirals essentially depended on small perturbations of the inviscid power-law vortex. Consequently, all the solutions they construct have self-similar {\textit{pseudo-velocity}} $U- \frac{\xi}{\alpha}$ with a single spiral stagnation point.

We believe it is worthwhile for the non-uniqueness program to discover self-similar profiles whose pseudo-velocities have more than one stagnation point. To this end, we construct and classify homogeneous self-similar profiles with pseudo-velocity fields having multiple sinks away from the origin. We call  these self-similar profiles \textbf{multi-sink solutions}. As a first example of a multi-sink solution, we have the following consequence of Theorem \ref{thm:MAIN2} and Theorem \ref{thm:MAIN3} that we prove below.

\begin{theorem*}
\label{thm:MAIN}
    For any scaling parameter $\alpha \in (0,1)$, there exist $(-\alpha)$-homogeneous self-similar vorticity profiles $\Omega$ for which the corresponding pseudo-velocity field $U-\frac{\xi}{\alpha}$ has multiple sink stagnation points away from the origin and a saddle point at the origin. In particular, up to isometries of the plane, there exists a unique such profile that has exactly two sinks and is $2$-fold rotationally symmetric.
\end{theorem*}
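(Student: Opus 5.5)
The plan is to look for the profile among homogeneous steady states. Write $\Psi = \rho^{2-\alpha} f(\theta)$, so that $\Omega = \rho^{-\alpha}\bigl(f'' + (2-\alpha)^2 f\bigr)$. For a $(-\alpha)$-homogeneous $\Omega$ one has $-\frac{1}{\alpha}\xi\cdot\nabla\Omega = \Omega$ identically. Hence \eqref{eq:ssEE} reduces to $U\cdot\nabla\Omega = 0$, the steady Euler equation.

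\emph{Stagnation points.} A direct computation gives $U = \nabla^\perp\Psi = \rho^{1-\alpha}\bigl(-f'(\theta)\, e_\rho + (2-\alpha) f(\theta)\, e_\theta\bigr)$. Therefore $U - \xi/\alpha = 0$ at a point $\xi\neq 0$ exactly when $f(\theta_0)=0$ and $\rho_0^{\alpha} = -\alpha f'(\theta_0)$. So each zero $\theta_0$ of $f$ with $f'(\theta_0)<0$ produces exactly one stagnation point. Zeros with $f'>0$ produce none. Since the power-law vortex has $f>0$, multiple stagnation points force $f$ to change sign.

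\emph{Construction.} I would impose the homogeneous functional relation $\Omega = F(\Psi)$ with $F(s) = c_\pm |s|^{-\beta}$ on $\{\pm\Psi>0\}$, where $\beta = \alpha/(2-\alpha)\in(0,1)$ because $\alpha\in(0,1)$. This reduces the problem to the autonomous ODE $f'' + (2-\alpha)^2 f = \pm c_\pm |f|^{-\beta}$ on sectors where $f$ has a fixed sign. The ODE has the conserved energy
\[
E = \tfrac12 f'^2 + \tfrac12(2-\alpha)^2 f^2 \mp \tfrac{c_\pm}{1-\beta}|f|^{1-\beta}.
\]
Since $\beta<1$, $|f|^{-\beta}$ is integrable near a zero. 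Consequently $f'$ extends continuously across each zero, so $U$ is continuous there. The vorticity blows up like $|\theta-\theta_0|^{-\beta'}$, which gives the velocity cusps along the rays. I would take $f\in C^1$ globally and piece together positive and negative arcs between consecutive zeros, matching $|f'|$ at each zero. This matching is automatic by energy conservation once the energies of adjacent arcs agree.

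I would then verify that the glued $\Omega\in L^1_{loc}$ is a weak steady solution. On each open sector, $U\cdot\nabla\Omega=0$ holds classically. Across a zero ray $U$ is tangential up to the term $-f' e_\rho$, and the flux of $\Omega U$ is integrable. So the distributional identity $\operatorname{div}(U\Omega)=0$ should follow by cutting off near the rays and letting the cutoff shrink, using $\beta<1$.

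\emph{Two-sink, 2-fold symmetric case.} By the stagnation-point analysis, two sinks require exactly four zeros $\theta_0,\theta_1,\theta_0+\pi,\theta_1+\pi$ with alternating signs of $f'$. This gives one positive arc of width $a$ and one negative arc of width $\pi-a$, repeated under $\theta\mapsto\theta+\pi$. The constants $c_\pm$ scale out; after fixing scale and rotation, the unknowns are the common boundary slope and the ratio $c_-/c_+$. They are constrained by the width equations: half-period of the positive arc equals $a$, and half-period of the negative arc equals $\pi - a$. To prove existence and uniqueness I would show that the half-period (time map) $T_\pm$ of an arc joining $f=0$ to $f=0$ is strictly monotone in the relevant energy or shape parameter, and compute its limits at both ends of the parameter range. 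Monotonicity gives uniqueness, and the intermediate value theorem gives existence, for the single constraint $T_+ + T_- = \pi$. The analogous gluing with $k$ positive arcs gives profiles with $k$ sinks, which proves the "multiple sinks" part.

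\emph{Stability type.} At a stagnation point the divergence of $U-\xi/\alpha$ equals $-2/\alpha<0$, since $U$ is divergence free. Hence it suffices to show that the Jacobian determinant is positive there. I would compute it from $f'(\theta_0)$ and $f''(\theta_0)$. Because $f''$ blows up at the zero with the sign of $-c_\pm|f|^{-\beta}$ dictated by the adjacent arc, the linearization must be replaced by a direct phase-plane argument along the cusp ray. At the origin, $|U|\sim\rho^{1-\alpha}\gg \rho/\alpha$, so the dynamics are dominated by $U$. Since $f$ changes sign four times, the origin has hyperbolic sectors, which gives the saddle structure.

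\emph{Main obstacle.} I expect the main obstacle to be the monotonicity of the time maps $T_\pm$ for the singular nonlinearity $|f|^{-\beta}$, together with the rigorous sink classification at the cusp where the vector field is not $C^1$. For the time map, I would first try the standard Chicone-type criterion after the substitution $f = g^{1/(1-\beta)}$, which desingularizes the potential.
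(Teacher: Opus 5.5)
Your reduction matches the paper's. With $\lambda=2-\alpha$, the law $\Omega=c_\pm|\Psi|^{-\beta}$ on each sector is the Bernoulli integral \eqref{eq:2.3} (with $c=(\lambda-1)B/\lambda$ for $|f|$). Your energy at a zero is $-P$, and your stagnation-point computation is the one in Theorem~\ref{thm:MAIN3}.

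\textbf{The gap is the uniqueness step.} By your own count, after fixing scale and rotation you have two unknowns, the slope $\sqrt{-2P}$ and the ratio $c_-/c_+$, but only the single constraint $T_++T_-=\pi$. No monotonicity of the individual time maps can make one equation in two unknowns uniquely solvable. In fact the solutions form a continuum:
\begin{itemize}
\item By scaling, the time map of an arc depends only on $\mathrm{sgn}\,c$ and on $P|c|^{-\lambda}$.
\item The continuity and endpoint limits you propose to compute (the paper's lemma) show that $T_+$ takes every value in $(\pi/\lambda,\pi)$ and $T_-$ every value in $(0,\pi/\lambda)$.
\item Since $\pi-\pi/\lambda<\pi/\lambda$, for each $a\in(\pi/\lambda,\pi)$ you can fix $P$, pick $c_+$ with $T_+=a$, and then pick $c_-$ with $T_-=\pi-a$.
\item Nothing forbids this. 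On a ray where $f=0$ one has $U\cdot e_\theta=0$, so the weak formulation only requires $P$ (equivalently $|f'|$) to match, not the Bernoulli constants.
\end{itemize}
Different $a$ give non-isometric $2$-fold symmetric profiles, each with two sinks and a saddle at the origin. So your existence argument works, and even avoids the paper's asymptotics. But the uniqueness clause is false in your setting unless you add a normalization.

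\textbf{The paper's normalization and what it requires.} The paper singles out one profile by taking $|B|=1$ on every arc, i.e.\ $|c_-|=|c_+|$, a single law $\Omega=c|\Psi|^{-\beta}$ on the whole plane. This leaves $P$ as the only unknown, and there your plan fails differently:
\begin{itemize}
\item $T_+$ decreases and $T_-$ increases in $|P|$, so monotonicity of the pieces says nothing about the sum.
\item The endpoint values are $T_++T_-=\pi$ at $P=0$ (the shear flow, exactly the target) and $2\pi/\lambda>\pi$ as $P\to-\infty$. These limits do not bracket $\pi$, so the intermediate value theorem gives nothing.
\end{itemize}
The missing ingredient is the small-$|P|$ expansion of $T_\pm$ via Mellin transforms. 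Its leading orders are $|P|$, $|P|\log(|P|^{-1})$ and $|P|^{1/(2\lambda-2)}$ for $\lambda<\tfrac32$, $\lambda=\tfrac32$ and $\lambda>\tfrac32$ respectively. It shows $T_++T_-<\pi$ for small $|P|$, and only then is a root guaranteed.

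Even this yields existence, not uniqueness. You would still need $T_++T_-$ to cross $\pi$ exactly once on $(-\infty,0)$, and neither your proposal nor the paper proves that.

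\textbf{The cusp at the sinks is a minor issue.} Write the pseudo-velocity as $V_\rho e_\rho+V_\theta e_\theta$. Its derivative at the stagnation point is triangular in this frame, because $\partial_\rho V_\theta=0$ there and only the off-diagonal entry $-\rho^{\lambda-2}f''$ is unbounded. Hence $\dot\theta\approx-\tfrac{\lambda}{\alpha}(\theta-\theta_0)$ contracts first, and then $\rho\to\rho_0$ at rate $1$. This justifies the paper's rates $-1$ and $-\lambda/\alpha$.
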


The construction of multi-sink solutions to the incompressible two-dimensional Euler equations is based on gluing together solutions that are locally defined on sectors. There are many ways to perform such gluings with the hope of constructing multi-sink solutions. In the case of $\alpha \in [1/2,1)$, we in fact provide a complete classification of multi-sink solutions. Taken together with the classification done in \cite{LS} of homogeneous steady states that are not obtained by gluing, our Theorem \ref{thm:MAIN2}, completes the classification of all $(-\alpha)$-homogeneous steady states of the two-dimensional Euler equations when $\alpha \in [1/2,1)$. In the case when $\alpha \in (0,1/2)$, we are also able to construct many examples of multi-sink solutions, and a finer analysis of certain "transcendental elliptic integrals" used in the construction would probably lead to a complete classification in this case as well.

Our multi-sink solutions have velocity that is only H\"{o}lder continuous, due to the gluing procedure; however, this is not a peculiarity of our construction, but rather a consequence of the additional stagnation points. In particular, we prove in Proposition \ref{IRREGULAR} below that any homogeneous self-similar solution to the two-dimensional incompressible Euler equations whose pseudo-velocity has more than one stagnation point has vorticity that is discontinuous and unbounded along an infinite ray.

We particularly note the existence of two distinct multi-sink solutions that are $2$-fold rotationally symmetric and possess two sinks away from the origin. The first example, which is additionally $4$-fold rotationally symmetric, was utilized as a barrier function in a technical argument by the authors of \cite{EH}. The second example is the one highlighted in the last sentence of our Theorem, and we refer to this as the \textbf{two-sink solution}. This solution is of special interest since it appears to converge to the power-law shear flow with velocity $(y |y|^{-\alpha},0)$ at a quadratic rate as $\alpha \to 0^+$. In this sense, the two-sink solution can be viewed as bifurcating from the power-law shear flow. We provide a combination of numerical and rigorous analytical evidence for this convergence in Section \ref{sec:SHEAR}.

The pseudo-velocity field of the two-sink solution is pictured in Figure \ref{fig:AFAR} and Figure \ref{fig:CLOSE}. Keeping in mind the numerical experiments done in \cite{BS}, we consider our two-sink solution as a plausible candidate for a non-uniqueness mechanism. The theorem we state above, in conjunction with the work showing the existence of self-similar solutions from generic data at infinity, demonstrates the feasibility of the non-uniqueness scenario proposed by Bressan and co-authors.

\textbf{Acknowledgments.}
We thank Peter Constantin and Alexandru D.~Ionescu for helpful comments on this note. We thank Tarek M.~Elgindi and Yupei Huang for pointing out their work in \cite{EH}. We acknowledge Princeton University for its support. MPC also acknowledges the support of the National Science Foundation in the form of an NSF Graduate Research Fellowship.

\begin{figure}[H]
    \centering
    \includegraphics[width=0.75\linewidth]{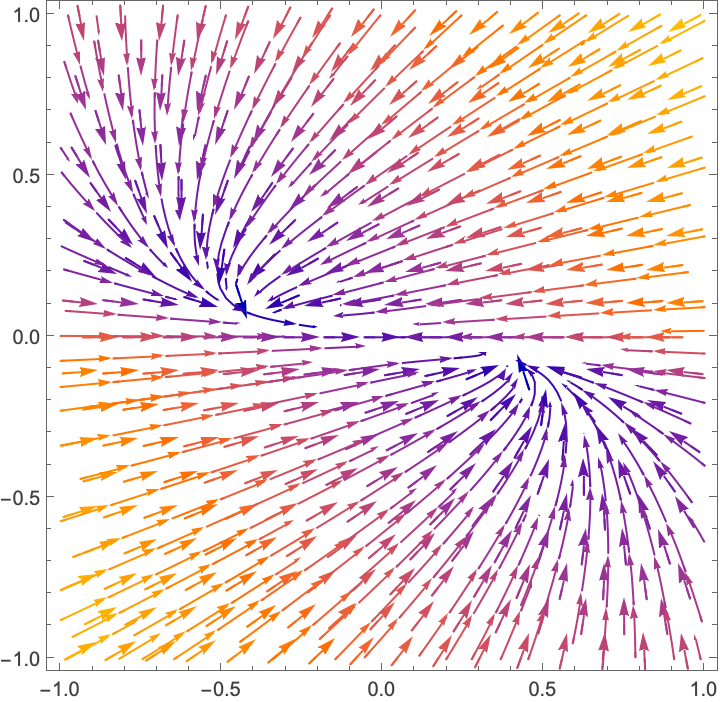}
    \caption{Vector Plot of the Pseudo-Velocity of the Two-Sink Solution when $\alpha = \frac{3}{4}$ on $[-1,1]^2$ }
    \label{fig:AFAR}
\end{figure}

\begin{figure}[H]
    \centering
    \includegraphics[width=0.75\linewidth]{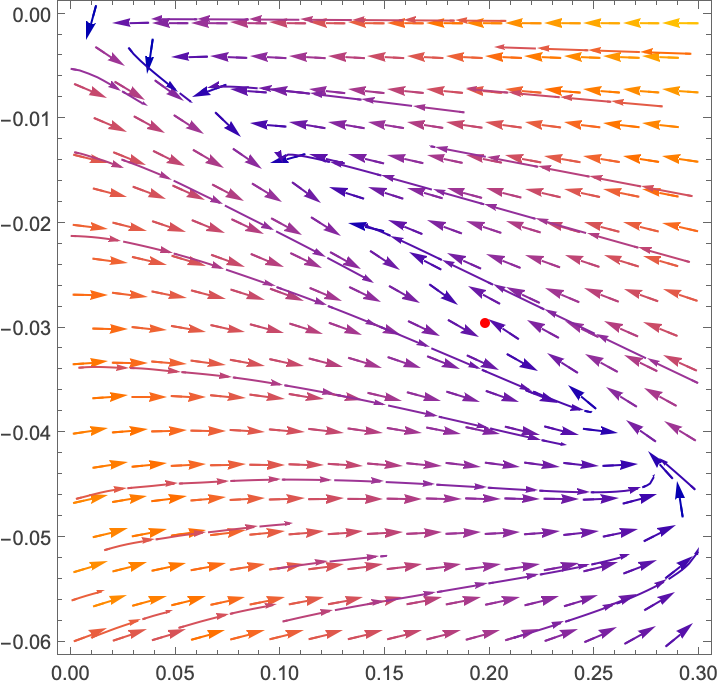}
    \caption{Vector Plot of the Pseudo-Velocity of the Two-Sink Solution when $\alpha = \frac{3}{4}$ on $[0, 0.3] \times [-0.06, 0]$. The red dot is the location of a stagnation point.}
    \label{fig:CLOSE}
\end{figure}

\section{Homogeneous Solutions of the Euler Equations and a Hamiltonian System}

Consider a homogeneous solution $u$ of the steady two-dimensional Euler equations. In other words, for some scalar pressure $p$, the vector field $u$ satisfies the system
\[u \cdot \nabla u + \nabla p = 0\]
$$\textrm{div } u=0,$$
and has the form $u = \nabla^\perp \varphi$, $\varphi = r^{\lambda} \psi(\theta)$, and $p = r^{2\lambda - 2} P(\theta)$. Then, $u = r^{\lambda - 1} (-\psi' \mbf{e}_r + \lambda \psi \mbf{e}_\theta)$ and putting this into the steady two-dimensional Euler equations gives
\begin{align}
    - \lambda \psi \psi'' + (\lambda - 1)(\psi')^2 - \lambda^2 \psi^2 + 2(\lambda - 1)P &= 0, \label{eq:ODE} \\
    P' &= 0. \notag
\end{align}
If $\lambda > 0$, $P$ is a constant, and $\psi\in H^1(\mbb{T})$ solves Equation \eqref{eq:ODE}, then $u = \nabla^\perp(r^\lambda \psi(\theta)) \in L^2_{\text{loc}}$ is a steady state of the two-dimensional Euler equations. Since the solution $(u,p)$ is homogeneous, it is also a steady solution of the two-dimensional self-similar Euler equations (Equation \eqref{eq:ssEE}) with self-similar variable $\xi = x/t^{1/\alpha}$ where $\alpha = 2-\lambda$.

In general, we find homogeneous solutions $\Psi = r^{2-\alpha} \psi(\theta)$ by solving the ordinary differential equation given in Equation \eqref{eq:ODE}, which turns out to be a Hamiltonian system. This method for constructing homogeneous solutions to the two-dimensional Euler equations was used in \cite{LS} and is probably known to earlier mathematicians as well. To construct multi-sink solutions, we find homogeneous solutions with pseudo-velocity vector field $\nabla^\perp \Psi - \frac{1}{\alpha} \xi$ having several sink stagnation points.

\section{Construction of Multi-Sink Solutions}
In this section, we build upon the preliminary results on homogeneous solutions presented in the paper \cite{LS} of Luo and Shvdkoy, and we construct multi-sink solutions. While Luo and Shvdkoy perform a complete classification of local solutions to the Hamiltonian system, their paper omits the construction of any particular non-trivial $2\pi$-periodic function $\psi(\theta)$. Our construction requires a fine asymptotic analysis of the period functions associated to the Hamiltonian system that will occupy us for the rest of this note.

\subsection{Local Solutions}
We rewrite Equation \eqref{eq:ODE} as
\begin{equation} \label{eq:2.1}
    \psi'' = \frac{\lambda-1}{\lambda} \frac{(\psi')^2 + 2P}{\psi} - \lambda \psi.
\end{equation}
If $\psi$ is sign-definite on an interval $I = (a,b)$ and solves Equation \eqref{eq:2.1}, then it is smooth in $I$. When a local solution is extended to its maximal interval of existence, $\psi$ vanishes at the end points (unless an end point is $\pm\infty$). If $\psi \in C^\infty((-a,a))$ is a maximally extended local solution, then $-\psi$ and $\psi(-\theta)$ are also local solutions. Thus, every negative local solution is obtained by multiplying a positive solution by $(-1)$ and vice versa. Also, every local solution is even with respect to the midpoint of the maximal interval of existence.

Equation \eqref{eq:ODE}, equivalently Equation \eqref{eq:2.1}, has a first integral that originates from the Bernoulli function $|u|^2 + 2p$. Since the steady two-dimensional Euler equations can be re-written as $\omega u^\perp + \nabla(|u|^2 + 2p) = 0$, we obtain
\begin{equation} \label{eq:2.2}
u \cdot \nabla (|u|^2 + 2p) = 0.
\end{equation}
Assume that $\psi$ is a positive local solution. Then, if we in addition assume the homogeneous ansatz, integrating Equation \eqref{eq:2.2} yields
\begin{equation} \label{eq:2.3}
B = (2P + (\psi')^2 + \lambda^2 \psi^2)\psi^{\frac{2}{\lambda} - 2}.
\end{equation}
Equation \eqref{eq:2.1} can be rewritten using the conservation law in Equation \eqref{eq:2.3}:
\begin{equation} \label{eq:2.4}
\psi'' = \frac{(\lambda - 1)B}{\lambda} \psi^{1 - \frac{2}{\lambda}} - \lambda^2 \psi.
\end{equation}
If $(\psi, P)$ is a local solution, then $(a\psi, a^2 P)$ is also a local solution for any $a>0$. Thus, we may assume $B = 1$ or $B = -1$ without loss of generality.

From now on, we always assume that $1<\lambda<2$ (equivalently $\alpha \in (0,1)$). The streamlines of the phase portrait for the ordinary differential equation are given by the curves $(\psi, \psi')$, where 
\[2P = B \psi^{2 - \frac{2}{\lambda}} - (\psi')^2 - \lambda^2 \psi^2\]
is always satisfied. Now we have two cases depending on the sign of $B$.

\noindent\textit{Case 1.} $B = 1$\\
For $P > 0$, the streamline is a closed smooth curve in the half plane $\{\psi > 0\}$, and for $P = 0$, the streamline touches the axis $\{\psi = 0\}$ tangentially at the origin. For $P<0$, the streamline is a smooth curve connecting $(0,\pm\sqrt{-2P})$.\\

\noindent\textit{Case 2.} $B = -1$\\
In this case, we necessarily have $P < 0$ and that the streamline is a smooth curve connecting $(0,\pm\sqrt{-2P})$.

\subsection{Gluing Local Solutions}
Let $\psi_1 : [0,T_1] \to \R_{\geq0}$ and $\psi_2 : [0, T_2] \to \R_{\geq0}$ be maximal local solutions with the same value of $P < 0$. Then, we can glue two solutions $\psi_1$ and $-\psi_2$ at the endpoint to obtain a solution $\psi : [0,T_1 + T_2] \to \R$ to the original differential equation in Equation \eqref{eq:ODE}. From the construction, $\psi \in C^1$ and since
\begin{equation}
\label{eq:FIRSTORDER}
(\psi')^2 = -2P + \text{sign}(\psi) |\psi|^{2-\frac{2}{\lambda}} - \lambda^2 \psi^2,
\end{equation}
we in fact get $\psi \in C^{1,2-\frac{2}{\lambda}}$. In this way, we can glue any even number of maximally defined local solutions to obtain a $C^{1,2-\frac{2}{\lambda}}$ solution to Equation \eqref{eq:ODE}. When $1<\lambda<2$ and $P < 0$, there are two distinct, positive, and maximal local solutions, which both connect $(0,\pm\sqrt{-2P})$ in the phase portrait of the Hamiltonian system. We denote by $\psi_\pm$, the maximal local solution corresponding to $B=\pm 1$.

\begin{remark}
The solution in the case $P=0$ is a shear flow $\psi = A|\cos(\theta)|^\lambda \in C^{1,\lambda - 1}$. Thus, a solution constructed by gluing exhibits better regularity than the shear flow solution.
\end{remark}

Let $T_\pm$ be the lifespans of maximal local solutions corresponding to $B=\pm 1$. Since we have an analytic expression for the streamlines, $T_\pm$ can be written as an integral:
\begin{equation} \label{eq:PERIOD}
T_\pm = \int_{0}^1 \frac{2}{\sqrt{\lambda^2 (1-s^2) \mp x_\pm^{-2/\lambda} (1 - s^{2-2/\lambda})}}\,ds,
\end{equation}
where $x_\pm$ are positive solutions of
\[\lambda^2 x_{\pm}^2 \mp x_{\pm}^{2-\frac{2}{\lambda}} = -2P.\]
The integrals $T_\pm$ are the "transcendental elliptic integrals" we referred to in the introduction.
\begin{lemma}
    If $1 < \lambda < 2$, then the integrals $T_\pm$ are continuous monotonic functions of $P\in (-\infty, 0]$, where $T_+(P=0) = \pi$, $T_-(P=0) = 0$, and $\displaystyle\lim_{P\to -\infty} T_\pm = \frac{\pi}{\lambda}$.
\end{lemma}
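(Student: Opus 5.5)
Substituting $\psi = x_\pm s$ reduces the lemma to a one-parameter family of integrals that we can analyse directly. Set $\mu = 2-\frac{2}{\lambda}\in(0,1)$. On the maximal solution $\psi_\pm$, the maximum value $x_\pm$ of $\psi$ is attained where $\psi'=0$. The first integral then gives $(\psi')^2 = \lambda^2(x_\pm^2-\psi^2) \mp (x_\pm^{\mu}-\psi^{\mu})$. Writing $\psi = x_\pm s$ and using the evenness of $\psi_\pm$ about its midpoint yields exactly \eqref{eq:PERIOD}, and it shows that $T_\pm$ depends on $P$ only through $c_\pm := x_\pm^{-2/\lambda}$. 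Thus the proof splits into three steps:
\begin{itemize}
\item study the map $P\mapsto x_\pm$;
\item study the map $c\mapsto T_\pm(c)$;
\item compute the endpoint limits.
\end{itemize}

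\textbf{Step 1: the map $P \mapsto x_\pm$.} For the $+$ case, $f_+(x)=\lambda^2x^2 - x^{\mu}$ is negative on $(0,x_0)$, where $x_0=\lambda^{-2/\lambda}$ is its positive zero, and is increasing on $[x_0,\infty)$. Hence for $P\le 0$ the relevant root $x_+\ge x_0$ is unique and depends continuously and monotonically on $P$. As $P$ increases from $-\infty$ to $0$, $x_+$ decreases from $\infty$ to $x_0$, so $c_+$ ranges over $(0,\lambda^2]$. For the $-$ case, $f_-(x)=\lambda^2x^2+x^\mu$ is strictly increasing from $0$, so $x_-$ ranges over $(0,\infty)$ monotonically and $c_-$ ranges over $(0,\infty)$.

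\textbf{Step 2: monotonicity in $c$.} Define $D(s)=1-s^{\mu}$ and $E(s)=1-s^2$; both are positive on $(0,1)$. Then
\[T_\pm = \int_0^1 \frac{2\,ds}{\sqrt{\lambda^2E(s) \mp cD(s)}}.\]
For fixed $s$ the integrand is increasing in $c$ in the $+$ case and decreasing in $c$ in the $-$ case. Both families are therefore monotone in $c$, and by Step 1 they are monotone in $P$.

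Continuity follows from dominated or monotone convergence as long as the integrand stays integrable. In the $-$ case the integrand is bounded by $2/(\lambda\sqrt{E})$, which is integrable. In the $+$ case, for $c<\lambda^2$ we have $\lambda^2E-cD \ge \kappa(1-s)$ near $s=1$ with some $\kappa>0$, because $\lambda^2E'(1)=-2\lambda^2$ and $cD'(1)=-c\mu$, and $2\lambda^2 > c\mu$ when $c\le\lambda^2$. This gives integrability and local uniform domination. The main obstacle is the endpoint $c=\lambda^2$ (that is, $P=0$). There the denominator degenerates near $s=0$: $\lambda^2(E-D)=\lambda^2(s^\mu-s^2)\sim \lambda^2 s^\mu$, which is still integrable since $\mu<2$. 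Continuity up to $P=0$ then follows by monotone convergence, since the integrand increases to the limiting integrable function.

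\textbf{Step 3: endpoint values.} As $P\to-\infty$ we have $c\to 0$ in both cases, and dominated convergence gives $T_\pm \to \int_0^1 2/(\lambda\sqrt{1-s^2})\,ds = \pi/\lambda$. As $P\to 0^-$, $c_-\to\infty$ and the integrand in $T_-$ tends to $0$ pointwise; domination by $2/(\lambda\sqrt E)$ then gives $T_-\to 0$. For $T_+$ at $P=0$ we compute $\int_0^1 2\,ds/(\lambda\sqrt{s^{\mu}-s^2})$. Substituting $s^{2-\mu}=s^{2/\lambda}=\sin^2\phi$ turns this into $\frac{2}{\lambda}\cdot \lambda\int_0^{\pi/2} d\phi = \pi$. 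Alternatively, one can read the value off the explicit shear solution $A|\cos\theta|^\lambda$ of the Remark, whose positive arc has length $\pi$.
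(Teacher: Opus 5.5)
Your proposal is correct and follows essentially the same route as the paper: track $P\mapsto x_\pm$ (equivalently $c_\pm=x_\pm^{-2/\lambda}$), use the pointwise monotonicity of the integrand in $c_\pm$, and evaluate the endpoints. Your substitution $s^{2/\lambda}=\sin^2\phi$ is the paper's $t=s^{1/\lambda}$ with $t=\sin\phi$, and your domination arguments supply continuity details the paper only asserts.

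One small slip: the positive zero of $\lambda^2x^2-x^{\mu}$ is $x_0=\lambda^{-\lambda}$, not $\lambda^{-2/\lambda}$. Your conclusion $c_+\in(0,\lambda^2]$ is nevertheless correct.
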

\begin{proof}
Since $\lambda^2 = x_+^{- \frac{2}{\lambda}} -2P x_+^{-2}$, we observe that $x_+^{-\frac{2}{\lambda}}$ as a function of $-P\in [0, \infty)$ is continuous, monotonically decreasing, and
\[x_+^{-\frac{2}{\lambda}}(P = 0) = \lambda^2, \quad x_+^{-\frac{2}{\lambda}}(P = -\infty) = 0.\]
When $x_+^{-\frac{2}{\lambda}} \in [\lambda^2 ,\infty]$, the integral expression of $T_+$ in Equation \eqref{eq:PERIOD} is integrable, so $T_+$ is continuous and monotonically decreasing in $-P\in [0, \infty)$. In particular, the values of $T_+$ at the endpoints $P=0$ and $P=-\infty$ are evaluated to be
\[T_+(P=0) = \int_0^1 \frac{2}{\lambda \sqrt{s^{2-2/\lambda}-s^2}} \,ds = \int_0^1 \frac{2}{\sqrt{1-t^2}}\,dt = \pi,\]
and
\[T_+(P=-\infty) = \frac{2}{\lambda} \int_0^1 \frac{1}{\sqrt{1-s^2}} \,ds = \frac{\pi}{\lambda}.\]
Similarly, $x_-$ as a function of $-P\in [0, \infty)$ is continuous, monotonically increasing, and
\[x_-(P = 0) = 0, \quad x_-(P = -\infty) = \infty.\]
Thus, $T_+$ is continuous and monotonically increasing in $-P\in [0, \infty)$. Moreover, the values of $T_+$ at the endpoints $P=0$ and $P=-\infty$ are
\[ T_-(P=0) = x_-^{1/\lambda}(P=0) \int_0^1 \frac{2}{\lambda}\frac{ds}{\sqrt{1-s^2}} = 0, \]
and
\[T_- (P=-\infty) = \frac{2}{\lambda} \int_0^1 \frac{1}{\sqrt{1-s^2}} \,ds = \frac{\pi}{\lambda}. \]
\end{proof}

\begin{theorem} \label{thm:MAIN2}
    If $1 < \lambda < 2$, then a $2\pi$-periodic solution to Equation \eqref{eq:ODE} can be obtained by gluing any of the following combinations in any order with alternating signs:
\begin{enumerate}[label=(\roman*)]
    \item $2k$ copies of $\psi_-$, for $k \geq 2$.
    \item One copy of $\psi_+$ and $2k-1$ copies of $\psi_-$, for $k \geq 2$.
    \item Two copies of $\psi_+$.
    \item Two copies of $\psi_+$ and two copies of $\psi_-$.
\end{enumerate}
    In particular, these possibilities together with (v) below, constitutes the complete list of possible gluing combinations for $1 < \lambda \leq \frac{3}{2}$.
\begin{enumerate}[resume, label=(\roman*)]
    \item Two copies of $\psi_+$ and $2k$ copies of $\psi_-$, for $k\geq 2$.
\end{enumerate}
\end{theorem}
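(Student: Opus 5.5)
The gluing problem reduces to one scalar equation in $P$. A $2\pi$-periodic glued solution consists of $m$ copies of $\psi_+$ and $n$ copies of $\psi_-$, all sharing the same value $P<0$. The signs must alternate, so $m+n$ is even. By the gluing construction the result is $C^{1,2-2/\lambda}$ at each junction. Periodicity then amounts to
\[
F_{m,n}(P) := m\,T_+(P) + n\,T_-(P) = 2\pi \quad \text{for some } P\in(-\infty,0).
\]
(I take $P<0$ strictly, since at $P=0$ the local solutions degenerate into the shear flow.) By the Lemma, $F_{m,n}$ is continuous on $(-\infty,0]$. Its value at $P=0$ is $m\pi$, and its limit as $P\to-\infty$ is $(m+n)\pi/\lambda$. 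Because $m+n$ is even, the order of the pieces is irrelevant: any cyclic arrangement with alternating signs works once the lengths sum to $2\pi$. The whole proof is therefore an application of the intermediate value theorem in each case, plus a necessity argument.

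\textbf{Existence.} We treat each case, recalling that $1<\lambda<2$ gives $\pi/2<\pi/\lambda<\pi$.
\begin{itemize}
\item Case (i), $m=0$, $n=2k$: here $F$ runs from $0$ to $2k\pi/\lambda$. Since $2k\pi/\lambda > k\pi \ge 2\pi$ for $k\ge2$, the value $2\pi$ is attained strictly inside.
\item Case (ii), $m=1$, $n=2k-1$: $F$ runs from $\pi$ to $2k\pi/\lambda>2\pi$.
\item Case (iii), $m=2$, $n=0$: $F=2T_+$ runs from $2\pi$ at $P=0$ to $2\pi/\lambda<2\pi$. The value $2\pi$ is attained only at the endpoint $P=0$, which is excluded. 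So the IVT argument gives nothing here.
\item Case (iv), $m=2$, $n=2$: $F$ runs from $2\pi$ to $4\pi/\lambda>2\pi$, so again $2\pi$ is only reached at the excluded endpoint.
\end{itemize}
Cases (iii) and (iv) are therefore the delicate ones. I would handle them by an asymptotic expansion of $T_\pm$ near $P=0$ using the integral formula \eqref{eq:PERIOD}. For case (iii) I expect $T_+$ to be strictly monotone, with $T_+(P)<\pi$ for $P<0$. Then two copies of $\psi_+$ can never close up with $P<0$, and the intended solution must be the limiting $P=0$ one, the $A|\cos\theta|^\lambda$ shear. I would reread (iii) in that light and check whether the authors permit $P=0$ there.

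For case (iv) I need the sign of $2T_+ + 2T_- - 2\pi$ near $P=0$. I would compute the leading corrections. The function $T_-$ grows like $x_-^{1/\lambda}\sim(-P)^{\lambda/(2(\lambda-1))}\cdot c$. The deficit $\pi-T_+$ is governed by $x_+^{-2/\lambda}-\lambda^2 \approx -2P\,x_+^{-2}$, i.e.\ it is linear in $-P$, with a derivative computable by differentiating under the integral. Near $s=0$ that integrand behaves like $s^{2-2/\lambda}$, which is integrable. Since $\lambda/(2(\lambda-1))>1$ for $\lambda<2$, the $T_-$ gain is of higher order than the $T_+$ loss. Hence $F<2\pi$ for small $-P$, while $F\to4\pi/\lambda>2\pi$ as $P\to-\infty$, and the IVT yields a root.

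\textbf{Completeness for $1<\lambda\le 3/2$.} We need $F_{m,n}(P)=2\pi$ to have no solution outside the listed cases. Since $T_\pm>0$ and $T_+<\pi$ for $P<0$, the monotonicity in the Lemma gives the bounds
\[
T_+ \in \Bigl(\frac{\pi}{\lambda},\pi\Bigr), \qquad T_- \in \Bigl(0,\frac{\pi}{\lambda}\Bigr).
\]
For $\lambda\le 3/2$ we have $\pi/\lambda\ge 2\pi/3$, so $m\ge3$ would give $F>3\cdot 2\pi/3=2\pi$; hence $m\le2$. The cases $m=1$ and $m=0$ with small $n$ are excluded by the requirement of a sink: I would check that $(m,n)=(1,1)$ gives $F<2\pi$ throughout, because $T_++T_-<\pi+\pi/\lambda\le$ needed? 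This bound is not decisive. Instead I would use monotonicity of $T_++T_-$, which tends to $\pi$ at $P=0$ and to $2\pi/\lambda<2\pi$ at $-\infty$, and show it never reaches $2\pi$ using $T_+<\pi$ and $T_-<\pi/\lambda<\pi$ together with a more careful estimate. Similarly $(0,2)$ gives $2T_-<2\pi/\lambda<2\pi$. What remains with $m\le2$ are exactly the listed families (i)--(v). The hardest step I anticipate is establishing the sign of the small-$P$ asymptotics that drives case (iv), and pinning down the precise inequalities ruling out $(1,1)$.
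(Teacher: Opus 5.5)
Your treatment of cases (i) and (ii), the bound $m\le 2$ for $\lambda\le 3/2$, and the exclusion of $(m,n)=(0,2)$ all match the paper. Your reading of (iii) is also the paper's: it is realized only at $P=0$, by the shear flow (Remark (a)).

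The genuine gap is case (iv). Its whole content is the sign of $T_++T_--\pi$ for small $-P$, and both asymptotic claims you make there are incorrect as stated.
\begin{itemize}
\item Since $x_-\approx(2|P|)^{\lambda/(2\lambda-2)}$, one has $x_-^{1/\lambda}\approx(2|P|)^{1/(2\lambda-2)}$. So $T_-\approx 2x_-^{1/\lambda}\int_0^1(1-s^{2-2/\lambda})^{-1/2}\,ds$ is of order $|P|^{1/(2\lambda-2)}$, not $|P|^{\lambda/(2\lambda-2)}$. This exponent exceeds $1$ only when $\lambda<3/2$.
\item The $a$-derivative at $a=0$ of the integrand of $T_+$ is $(1-s^{2-2/\lambda})(s^{2-2/\lambda}-s^2)^{-3/2}\sim s^{3/\lambda-3}$ as $s\to 0$. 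This is integrable only for $\lambda<3/2$.
\end{itemize}
At $\lambda=3/2$ the deficit $\pi-T_+$ is $\frac{9}{4}|P|\log(1/|P|)$. For $\lambda\in(3/2,2)$, the rescaling $s=a^{\lambda/(2\lambda-2)}u$ shows the deficit is of exact order $a^{1/(2\lambda-2)}\sim|P|^{1/(2\lambda-2)}$, the same order as the gain in $T_-$. So your argument, once the exponent is corrected, proves (iv) only for $1<\lambda<3/2$. For $\lambda\in(3/2,2)$ the ``higher order'' comparison is false, and nothing in the proposal decides the sign.

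What is missing is computing and comparing the two leading constants. The paper does this with the Mellin-transform expansion of Appendix B. At order $|P|^{1/(2\lambda-2)}$, the gain in $T_-$ is exactly $\sin(\pi/(2\lambda-2))<1$ times the loss in $T_+$. This gives $T_++T_-=\pi-C\bigl(1-\sin(\pi/(2\lambda-2))\bigr)|P|^{1/(2\lambda-2)}+O(|P|)$ with $C>0$. Because this ratio tends to $1$ as $\lambda\to 2$, no crude estimate can replace the exact constants.

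If you redo the computation, note that $\pi-T_+\approx\frac{2}{\sqrt{\pi}}\Gamma(\frac{\lambda}{2\lambda-2})\Gamma(\frac{2\lambda-3}{2\lambda-2})\,a^{1/(2\lambda-2)}$. The factor $\Gamma(\frac{2\lambda-1}{2\lambda-2})$ printed in the paper is a slip. It is inconsistent with the paper's own $1-\sin$ formula and with the direct computation $\pi-T_+\approx 2\sqrt{a}$ at $\lambda=2$.

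There are two smaller points in the completeness part.
\begin{itemize}
\item The bound you call not decisive is decisive, since $T_++T_-<\pi+\pi/\lambda<2\pi$ for $\lambda>1$. Sinks play no role in this exclusion.
\item You never check that (v) is actually realized. This requires $2T_++2kT_-<2\pi$ just below $P=0$. That is, the loss in $T_+$ (linear, or $|P|\log(1/|P|)$ at $\lambda=3/2$) must beat the $|P|^{1/(2\lambda-2)}$ gain in $T_-$, which is what the paper verifies.
\end{itemize}
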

\begin{remark}
(a) In the case (iii), we necessarily have that $P=0$ since $T_+(P=0) = \pi$ and $T_+$ is otherwise smaller than $\pi$. The $P=0$ solution to Equation \eqref{eq:ODE} is the homogeneous shear flow solution, either $\Psi = |y|^\lambda$ or $y |y|^{\lambda - 1}$.

(b) The case (iv), if we glue in the order $\psi_+, -\psi_-, \psi_+, -\psi_-$, yields a $2$-fold symmetric homogeneous solution to the two-dimensional Euler equations, which we refer to as the \textbf{two-sink solution}.

(c) In the case (i) with $k=2$, the resulting solution gives an exactly $4$-fold rotationally symmetric homogeneous steady solution to the two-dimensional Euler equations. This case was observed by Elgindi and Huang in the appendix of \cite{EH}.
\end{remark}

\noindent\textit{Proof of Theorem \ref{thm:MAIN2}.}
We will show in every case that there exists $P < 0$ such that sum of the lifespans equal $2\pi$.\\

\noindent \textit{Case (i)}. Since $T_-(P=0) = 0$ and $T_-(P=-\infty) = \frac{\pi}{\lambda} > \frac{\pi}{2}$, there exists $P < 0$ such that $T_-(P) = \frac{\pi}{k}$ for any $k\geq 2$.\\

\noindent \textit{Case (ii)}. Notice that $T_+(P=0) + (2k-1) T_-(P=0) = \pi$ and $T_+(P=-\infty) + (2k-1) T_-(P=0) = \frac{2k\pi}{\lambda} > 2\pi$. Thus, there exists $P < 0$ such that $T_+(P=0) + (2k-1) T_-(P=0) = 2\pi$ for any $k\geq 2$.\\

\noindent \textit{Case (iii)}. Simply $T_+(P=0) = \pi$.\\

\noindent \textit{Case (iv)}. Notice that $T_+(P=0) + T_-(P=0) = \pi$ and $T_+(P=-\infty) + T_-(P=-\infty) = \frac{2\pi}{\lambda} > \pi$. We prove in the next section that $T_+ + T_-$ is a decreasing function of $-P$ near $P=0$. This shows that there exists $P < 0$ such that $T_+(P=0) + T_-(P=0) = \pi$.\\

\noindent \textit{When $1 < \lambda \leq \frac{3}{2}$}. We see that $T_+ > \frac{\pi}{\lambda}\geq \frac{2\pi}{3}$. Thus, there cannot be more than two copies $\psi_+$ if we are to obtain a $2\pi$-periodic solution. In addition, since $T_\pm < \pi$ for $P < 0$, it is impossible to have only two copies of $\psi_-$, nor is it possible to have one copy of $\psi_+$ and $\psi_-$ each. It follows that the cases (i), (ii), (iii), (iv), and (v) are only possible combinations. Indeed, we prove in the next section that $T_+ + kT_-$ is a decreasing function of $-P$ near $P=0$ when $1 < \lambda < \frac{3}{2}$, so the case (v) is valid. \qed\\

We can now finish the proof of the theorem we stated in the introduction.
\begin{theorem}
\label{thm:MAIN3}
Suppose $P < 0$, and let $\psi$ be a $2\pi$-periodic solution to Equation \eqref{eq:ODE} obtained by gluing $2k$ maximal local solutions. Consider the corresponding homogeneous steady state $\Psi = r^\lambda \psi(\theta)$. Then, the pseudo-velocity field $\nabla^\perp \Psi - \frac{\xi}{\alpha}$ has a stagnation point at each angle $\theta$ where $\psi(\theta)$ changes sign from positive to negative. Moreover, all $k$ stagnation points other than the origin are sinks located at a distance of $\alpha^{\frac{1}{\alpha}} (-2P)^{\frac{1}{2\alpha}}$ from the origin. Finally, the origin is a saddle point.
\end{theorem}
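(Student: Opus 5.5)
The plan is to work entirely in polar coordinates $(r,\theta)$ in the self-similar variables, where the pseudo-velocity has the explicit form
\[
V := \nabla^\perp \Psi - \frac{\xi}{\alpha} = \Bigl(-r^{\lambda-1}\psi'(\theta) - \frac{r}{\alpha}\Bigr)\mathbf{e}_r + \lambda r^{\lambda-1}\psi(\theta)\,\mathbf{e}_\theta ,
\qquad \lambda = 2-\alpha .
\]

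\emph{Step 1: locating the stagnation points away from the origin.} For $r>0$, $V=0$ forces $\psi(\theta)=0$. By construction $\psi$ is sign-definite on each glued piece, so this happens exactly at the $2k$ gluing angles. At such an angle, Equation \eqref{eq:FIRSTORDER} with $\psi=0$ gives $(\psi')^2=-2P>0$. Hence $\psi'=-\sqrt{-2P}$ where $\psi$ changes sign from positive to negative, and $\psi'=+\sqrt{-2P}$ at the opposite crossings.

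The radial equation $r^{\lambda-1}\psi' = -r/\alpha$, i.e. $r^{-\alpha}=-\alpha^{-1}/\psi'$ (using $\lambda-2=-\alpha$), has a positive solution only when $\psi'<0$. This gives exactly $k$ stagnation points, all at the common radius $r_0$ determined by $r_0^{\alpha}=\sqrt{-2P}/\alpha$. I will redo this arithmetic carefully against the stated constant $\alpha^{1/\alpha}(-2P)^{1/(2\alpha)}$; the power of $\alpha$ depends on normalization conventions, and I expect a possible discrepancy in sign of the exponent that must be reconciled. At the origin, $|U|\lesssim r^{\lambda-1}\to0$ because $\lambda>1$, so the origin is also a stagnation point.

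\emph{Step 2: sinks.} Here the main obstacle appears. Linearization is not available, since $\psi''\sim |\psi|^{1-2/\lambda}$ blows up at the zeros of $\psi$ (indeed $\psi$ is only $C^{1,2-2/\lambda}$). So I will argue directly with the ODE system
\[
\dot r = -r^{\lambda-1}\psi'(\theta)-\frac{r}{\alpha},\qquad \dot\theta = \lambda r^{\lambda-2}\psi(\theta).
\]
Near $(r_0,\theta_0)$ we have $\psi(\theta)=\psi'(\theta_0)(\theta-\theta_0)+O(|\theta-\theta_0|^{1+\gamma})$ with $\gamma=2-2/\lambda$ and $\psi'(\theta_0)<0$. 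Therefore $\frac{d}{dt}(\theta-\theta_0)^2\le -c(\theta-\theta_0)^2$ on a small box, and $\theta\to\theta_0$ exponentially.

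For the radial variable, I compute $\partial_r\dot r$ at $\theta=\theta_0$; this is $(\lambda-2)/\alpha=-1<0$ after using the stagnation relation. The remaining $\theta$-dependence of $\dot r$ enters only through $\psi'(\theta)-\psi'(\theta_0)=O(|\theta-\theta_0|^{\gamma})$, which decays exponentially. A Lyapunov function of the form $(r-r_0)^2+C(\theta-\theta_0)^2$, or equivalently a trapping-box argument with a Grönwall estimate, should show that every nearby trajectory converges to $(r_0,\theta_0)$. This makes the point an attractor, i.e. a sink, in the topological sense. The delicate part is that the Hölder error term in $\dot r$ is not Lipschitz-small relative to $|\theta-\theta_0|$; I will handle it by first integrating the $\theta$-equation, which decouples up to the harmless factor $r^{\lambda-2}$ bounded above and below in the box.

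\emph{Step 3: saddle at the origin.} Near $r=0$ the term $r^{\lambda-1}$ dominates $r/\alpha$, since $\lambda-1<1$. So the local picture is governed by $U$ alone, up to a time change $d\tau = r^{\lambda-2}dt$. After this change, $\theta$ evolves by $\theta'=\lambda\psi(\theta)$ independently of $r$. The zeros of $\psi$ with $\psi'<0$ are attracting directions, and along them $\dot r\approx -r^{\lambda-1}\psi'>0$, so trajectories leave the origin. The zeros with $\psi'>0$ are repelling directions, and along them $\dot r<0$, so trajectories enter the origin. The invariant rays $\{\psi=0\}$ are thus alternately incoming and outgoing separatrices, and every other trajectory near $0$ enters along a sector and exits. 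This is the hyperbolic-sector structure of a (generalized, $2k$-prong) saddle, which is what I will show to conclude.
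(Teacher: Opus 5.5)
Your proposal is correct, and at the one point where care is needed it is more careful than the paper's proof. The paper finds the off-origin stagnation points exactly as you do. It then asserts that the eigenvalues of $\nabla(U-\xi/\alpha)$ there are $-1$ and $-\lambda/(2-\lambda)$, and concludes that these points are sinks. The saddle at the origin gets one sentence, based on the sign of $\psi'$ on the rays $\{\psi=0\}$.

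As you observe, that linearization is only formal. In the variables $(r,\theta)$ the Jacobian is upper triangular, with diagonal entries $\partial_r\dot r=-1$ and $\partial_\theta\dot\theta=\lambda r_0^{\lambda-2}\psi'(\theta_0)=-\lambda/\alpha$; these are the paper's two numbers. However, the off-diagonal entry $\partial_\theta\dot r=-r^{\lambda-1}\psi''(\theta)$ blows up as $\theta\to\theta_0$, since $|\psi''|\sim|\psi|^{1-2/\lambda}$ near a zero of $\psi$. So no $C^1$ linearization theorem applies.

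Your skew-product argument first gets $\theta\to\theta_0$ exponentially from the $\theta$-equation alone. It then gets $r\to r_0$ from the radial contraction rate $-1$ plus a forcing of size $O(|\theta-\theta_0|^{\gamma})$ that decays exponentially. It uses exactly those two diagonal rates, and turns the formal eigenvalue computation into an actual proof of asymptotic stability.

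Your time change $d\tau=r^{\lambda-2}\,dt$ likewise makes the paper's saddle remark precise. Near the origin, orbits shadow the streamlines $r^{\lambda}|\psi(\theta)|=\mathrm{const}$ of $U$. This gives $k$ incoming and $k$ outgoing separatrices: an ordinary saddle when $k=2$, and a $2k$-separatrix saddle when $k\geq 3$.

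Two corrections are needed.

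\emph{The radius.} There is no discrepancy with the stated constant. Your relation $r^{-\alpha}=-1/(\alpha\psi')=1/(\alpha\sqrt{-2P})$ is correct, but it gives $r_0^{\alpha}=\alpha\sqrt{-2P}$, not $\sqrt{-2P}/\alpha$. Hence $r_0=\alpha^{1/\alpha}(-2P)^{1/(2\alpha)}$, exactly as stated.

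\emph{The Lyapunov function.} The quadratic function $(r-r_0)^2+C(\theta-\theta_0)^2$ is not equivalent to your cascade argument, and it does not close. The cross term $|r-r_0|\,|\theta-\theta_0|^{\gamma}$, with $\gamma=2-2/\lambda<1$, cannot be absorbed by $(\theta-\theta_0)^2$. Use the trapping-box and Gr\"onwall route you describe. Alternatively, weight the angular part by $|\theta-\theta_0|^{2\gamma}$; its derivative along trajectories is still at most $-c|\theta-\theta_0|^{2\gamma}$.
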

\begin{proof}
The corresponding pseudo-velocity is:
$$ U - \frac{\xi}{2-\lambda} = r^{\lambda - 1} (-\psi' \mbf{e}_r + \lambda \psi \mbf{e}_\theta) - \frac{r}{2-\lambda} \mbf{e}_r.$$
At a stagnation point of the pseudo-velocity, we evidently require $\psi=0$ and 
$$r^{\lambda-1}\psi' +\frac{r}{2-\lambda} =0.$$
Therefore, on the angle $\theta$ with $\psi(\theta) = 0$, $\psi'(\theta) < 0$, there is a stagnation point with distance $r$ determined by the equation above. Since $|\psi'(\theta)| = \sqrt{-2P}$ given that $\psi(\theta) = 0$, we show that the distance $r = \alpha^{\frac{1}{\alpha}} (-2P)^{\frac{1}{2\alpha}}$.

Now, it is not difficult to see that the eigenvalues of $\nabla(U-\xi/(2-\lambda))$ at the two stagnation points away from the origin are precisely $-1$ and $\tfrac{-\lambda}{2-\lambda}$, so each of these stagnation points away from the origin is a sink. Lastly, the origin is a saddle since the flow is coming out (in) in the direction where $\psi' < 0 ~(>0)$.
\end{proof}

We also prove the following proposition, which shows that any homogeneous self-similar solution with more than one stagnation point is necessarily irregular.

\begin{proposition}
\label{IRREGULAR}
Let $\alpha \in (0,1)$.
    Suppose that $U(\xi)$ is a $(1-\alpha)$-homogeneous steady solution to the two-dimensional Euler equations. Suppose in addition that the corresponding pseudo-velocity 
    $$\tilde{U}(\xi):= U(\xi) - \frac{\xi}{\alpha}$$
    has a stagnation point in addition to the one at the origin $\xi=0$. Then the vorticity $\textrm{curl}~ U$ is discontinuous on a ray from the origin. 
\end{proposition}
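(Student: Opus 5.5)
We argue by contradiction: assume $\omega=\textrm{curl}\,U$ is continuous on every ray from the origin, i.e.\ continuous on $\R^2\setminus\{0\}$. We then show that the pseudo-velocity has no stagnation point away from the origin. Write $U=\nabla^\perp(r^\lambda\psi(\theta))$ with $\lambda=2-\alpha\in(1,2)$. Then $\omega=r^{\lambda-2}(\lambda^2\psi+\psi'')$, so continuity of $\omega$ off the origin means $\psi\in C^2(\mbb{T})$. Moreover, as computed in Section~2, $\psi$ solves Equation~\eqref{eq:ODE} with a constant $P$.

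As in the proof of Theorem~\ref{thm:MAIN3}, a stagnation point $r_0e^{i\theta_0}$ with $r_0>0$ forces $\psi(\theta_0)=0$ and $\psi'(\theta_0)=-r_0^{2-\lambda}/(2-\lambda)<0$. The key step is to evaluate Equation~\eqref{eq:ODE} at $\theta_0$. Since $\psi\in C^2$, the term $\lambda\psi\psi''$ vanishes there, and we obtain
\[2P=-(\psi'(\theta_0))^2<0 .\]
Thus $P<0$.

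We now derive a contradiction from $P<0$. Consider a maximal interval $I$ on which $\psi\neq0$. On $I$, Equation~\eqref{eq:2.1} holds, and so do the conservation law \eqref{eq:2.3} with some constant $B$ and Equation~\eqref{eq:2.4}. At an endpoint $\theta_1$ of $I$ we have $\psi(\theta_1)=0$, and by Equation~\eqref{eq:ODE}, $\psi'(\theta_1)^2=-2P>0$, so every zero of $\psi$ is simple. Near such an endpoint, Equation~\eqref{eq:2.4} gives
\[\psi''\sim \frac{(\lambda-1)B}{\lambda}\,|\psi|^{1-2/\lambda}.\]
Because $\lambda<2$, the exponent $1-2/\lambda$ is negative, so $\psi''$ blows up at $\theta_1$ whenever $B\neq0$. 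This contradicts $\psi\in C^2$ up to the endpoint. Hence $B=0$ on every interval, and then \eqref{eq:2.3} reads $2P+(\psi')^2+\lambda^2\psi^2=0$ there.

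It remains to rule out the degenerate case $B=0$. With $B=0$, Equation~\eqref{eq:2.4} becomes $\psi''=-\lambda^2\psi$ on each interval, so $\psi=A\cos(\lambda(\theta-c))$ with $A^2\lambda^2=-2P$. Each sign interval therefore has length exactly $\pi/\lambda$. The derivative matches across zeros up to sign, and the equation $\psi''=-\lambda^2\psi$ continues to hold across zeros since both sides are continuous. Consequently $\psi$ is a global solution of $\psi''+\lambda^2\psi=0$ on $\R$, and $2\pi$-periodicity requires $\lambda\in\mbb{Z}$. This is impossible for $\lambda\in(1,2)$, which gives the contradiction.

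The step I expect to need the most care is the blow-up argument. One must check that $\psi''$ genuinely behaves like $|\psi|^{1-2/\lambda}$ near a simple zero when $B\neq0$; this follows from \eqref{eq:2.4}, since the term $\lambda^2\psi$ stays bounded. One must also make sure that the $C^2$ regularity is really implied by continuity of $\omega$ along rays, and not only by continuity in $\theta$ at fixed $r$. By homogeneity these coincide: $\omega(r,\theta)=r^{\lambda-2}(\lambda^2\psi+\psi'')$, so continuity of $\omega$ off the origin is exactly continuity of $\lambda^2\psi+\psi''$ in $\theta$. The conclusion is that the vorticity must be discontinuous across some ray, namely a ray $\theta=\theta_1$ at which $\psi$ changes sign.
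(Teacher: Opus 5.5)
Your proof is correct, and it takes a different route from the paper's. The paper does not work with the ODE directly. It quotes the Luo--Shvydkoy classification of $2\pi$-periodic profiles for $\lambda\in(1,2)$: the power-law vortex, the power-law shear flow, and gluings of local solutions. It discards the first two because neither has a zero of $\psi$ with $\psi'<0$, and then invokes the fact that glued profiles are at most $C^{1,2-2/\lambda}$.

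You instead argue by contradiction from $\psi\in C^2(\mathbb{T})$, in three steps:
\begin{enumerate}[label=(\alph*)]
\item Evaluating \eqref{eq:ODE} at the stagnation angle gives $P<0$. This alone excludes the vortex, where $P>0$, and the shear flow, where $P=0$.
\item On each nodal arc, $\psi''+\lambda^2\psi=\pm\frac{(\lambda-1)B}{\lambda}|\psi|^{1-2/\lambda}$ blows up at the endpoints unless $B=0$. This is exactly the mechanism behind the paper's regularity claim, now made explicit.
\item The remaining case $\psi''+\lambda^2\psi=0$ is excluded because $\lambda\notin\mathbb{Z}$.
\end{enumerate}

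The paper's version is shorter once the classification is granted, and it ties the failure of regularity to the stagnation rays. Yours is self-contained and gains two things:
\begin{enumerate}[label=(\roman*)]
\item It shows the vorticity is actually unbounded near the offending ray, which is the stronger claim made in the introduction.
\item It handles the irrotational arcs $A\cos(\lambda(\theta-c))$ with $B=0$ uniformly. These are not among the normalized local solutions $\psi_\pm$ used in the gluing construction, and they glue smoothly to one another. So it is useful that your argument never needs every junction to be singular.
\end{enumerate}

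Your closing sentence, which places the discontinuity at a sign change of $\psi$, is an aside. It additionally uses smoothness of $\psi$ on the nodal arcs, which your contradiction argument does not supply, but the proposition does not require it.
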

\begin{proof}
We recall from Luo and Shvydkoy \cite{LS} that for $\alpha\in (0,1)$ (equivalently $\lambda \in (1,2)$), the only $2\pi$-periodic stream functions $\psi(\theta)$ possible correspond to the power-law vortex $\psi(\theta)=A$ for some constant $A$, the power-law shear flow $\psi(\theta) = A|\cos(\theta)|^\lambda$ for some constant $A$, and any solution obtained by gluing two local solutions to the Hamiltonian system together. 

As we observed in the proof of Theorem \ref{thm:MAIN3}, if the pseudo-velocity $\tilde{U}$ has a stagnation point for $r>0$, the it necessarily occurs at an angle $\theta$ where the stream function $\psi(\theta)=0$ and $\psi'(\theta)<0$. It is therefore not difficult to see that both the power-law vortex and the power-law shear flow are inadmissible (for one $\psi'=0$ always, for the other $\psi'=0$ whenever $\psi=0$). The only candidates remaining are obtained by gluing. As we previously observed, any gluing of local solutions yields at most $\psi \in C^{1,2-2/\lambda}$ regularity. In particular, the vorticity is discontinuous along the ray corresponding to the angle at which the stagnation points of $\tilde{U}$ occur.
\end{proof}

\subsection{Asymptotics as $P \to 0^-$ of the Periods $T_\pm$}

We finish the proof of Theorem \ref{thm:MAIN2} by showing that
\newpage
\begin{itemize}
    \item For $1 < \lambda < 2$, $T = T_+ + T_-$ decreases as $-P$ increases near $P=0$, and
    \item For $1 < \lambda \leq\frac{3}{2}$ and $k\geq 0$, $T_+ + kT_-$ decreases as $-P$ increases near $P=0$.
\end{itemize}
The following uses the theory presented in \cite{BH86}.

\begin{figure}[H]
    \centering
    \includegraphics[width=0.8\linewidth]{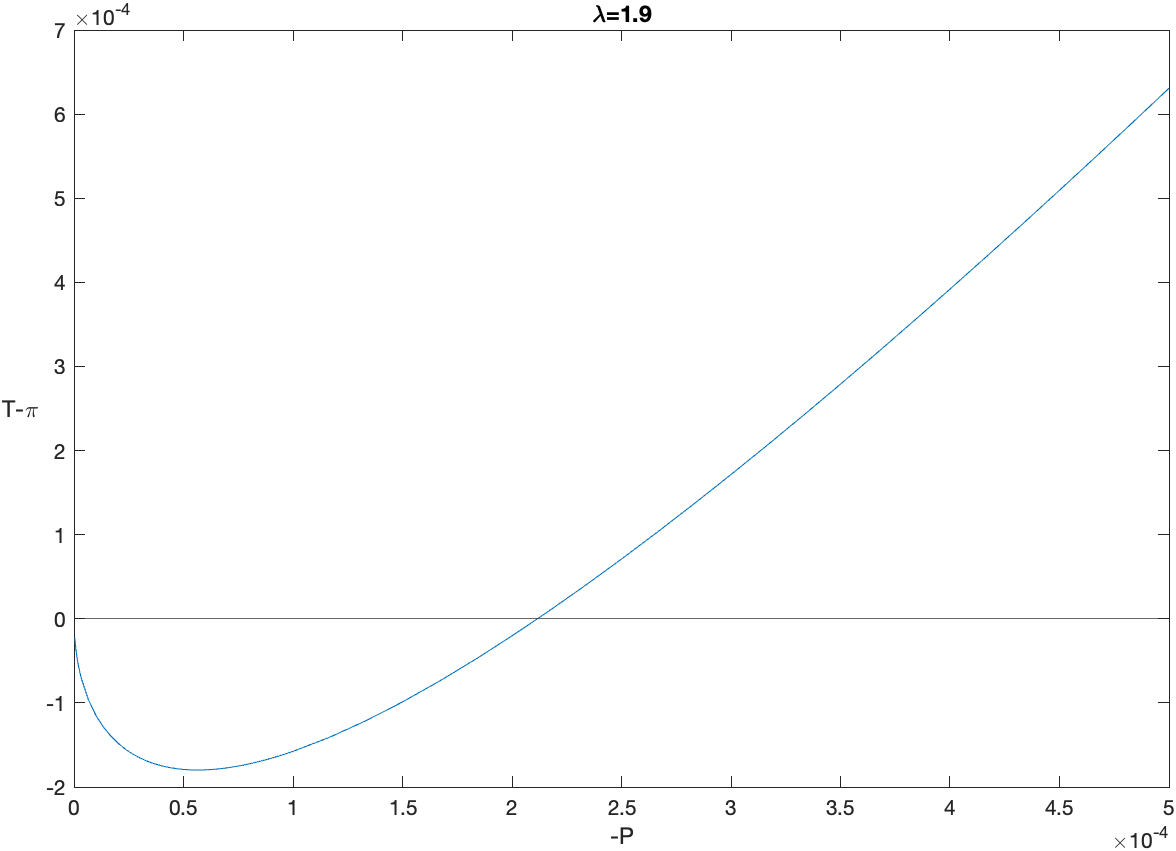}
    \caption{Graph of $T-\pi$ near $P=0$ when $\lambda = 1.9$}
    \label{fig:P}
\end{figure}

First, we consider $T_+$. Let $a = (\lambda^2 - x_+^{-2/\lambda})/\lambda^2$, then
\[T_+ = \frac{2}{\lambda} \int_0^1 \frac{ds}{\sqrt{s^{2-2/\lambda}-s^2 + a(1 - s^{2-2/\lambda})}}.\]
Since $- 2P \lambda^{2\lambda-2} (1-a)^\lambda = a$,
\[a = 2\lambda^{2\lambda - 2}|P| - 4 \lambda^{4\lambda-3} |P|^2 + O(|P|^3).\]
For $\lambda\in (1,\frac{3}{2})$, the $a$-derivative of the integrand is integrable, thus
\begin{align*}
T_+ &= \pi - \frac{2}{\lambda} \int_0^1 \frac{1-s^{2-2/\lambda}}{(s^{2-2/\lambda}-s^2)^{3/2}} \,ds \cdot a + o(a) \\
&= \pi - \int_0^1 \frac{t^{\frac{1}{2} - \lambda} - t^{-\frac{1}{2}}}{(1-t)^{\frac{3}{2}}} \,dt \cdot a + o(a) \\
&= \pi - \frac{2\sqrt{\pi} \Gamma(\lambda) \tan(\pi\lambda)}{\Gamma(\lambda - \frac{1}{2})}\cdot a + o(a).
\end{align*}
Therefore, for $\lambda\in (1,\frac{3}{2})$,
\[T_+ = \pi - \frac{4 \lambda^{2\lambda - 2} \sqrt{\pi} \Gamma(\lambda) \tan(\pi\lambda)}{\Gamma(\lambda - \frac{1}{2})} |P| + o(|P|).\]
For $\lambda \in [\frac{3}{2}, 2)$, let
\[f(s) = \frac{1}{\sqrt{s^{2-2/\lambda} - s^2}}, \quad g(t) = \sqrt{\frac{t}{1+t}}, \quad \phi(s) = \frac{s^{2-2/\lambda}-s^2}{1-s^{2-2/\lambda}}.\]
Then,
\[T_+ = \frac{2}{\lambda} \int_0^1 f(s) g(a^{-1} \phi(s)) \,ds = \frac{2}{\lambda} \int_0^{\frac{1}{\lambda-1}} F(t) g(a^{-1} t) \,dt. \]
We have the following expansions for $g, \phi$, and $F(t) = f(s)/\phi'(s)|_{s=\phi^{-1}(t)}$:
\begin{align*}
g(t) &= 1 - \frac{1}{2} t^{-1} + \frac{3}{8} t^{-2} - \frac{5}{16} t^{-3} + \ldots & \text{as}~t\to \infty, \\
\phi(s) &= s^{2- \frac{2}{\lambda}} \big(1 + s^{2-\frac{2}{\lambda}} - s^{\frac{2}{\lambda}} + s^{4-\frac{4}{\lambda}} + \ldots \big)  & \text{as}~ s\to 0^+, \\
\phi^{-1}(t) &= t^{\frac{\lambda}{2\lambda-2}} \bigg(1 - \frac{\lambda}{2\lambda-2} t + \frac{\lambda}{2\lambda-2} t^{\frac{1}{\lambda-1}} + \ldots \bigg) & \text{as}~ t\to 0^+,\\
\frac{f(s)}{\phi'(s)} &= \frac{\lambda}{2} s^{\frac{3}{\lambda}-2} \frac{(1-s^{2-2/\lambda})^2}{\lambda - 1 - \lambda s^{2/\lambda} + s^2} \frac{1}{\sqrt{1-s^{2/\lambda}}} \\ &=\frac{\lambda}{2(\lambda-1)} s^{\frac{3}{\lambda}-2} \bigg( 1 - 2 s^{2-\frac{2}{\lambda}} + \frac{3\lambda-1}{2\lambda-2} s^{\frac{2}{\lambda}} + s^{4-\frac{4}{\lambda}} + \ldots \bigg) & \text{as}~ s\to 0^+, \\
F(t) &= \frac{\lambda}{2\lambda-2} t^{\frac{3-2\lambda}{2\lambda-2}} \bigg( 1 - \frac{2\lambda-1}{2\lambda-2} t + \ldots \bigg) & \text{as}~ t\to 0^+.
\end{align*}
From the asymptotic expansion formula located in Appendix B, we have for $\lambda \in (\frac{3}{2} , 2)$,
\begin{align*}
T_+ &= \frac{2}{\lambda} M[F;1] + \frac{1}{\lambda-1} M \bigg[g;\frac{1}{2\lambda-2}\bigg] a^{\frac{1}{2\lambda-2}} - \frac{1}{\lambda} M[F;0] a + O(a^{1 + \frac{1}{2\lambda-2}}) \\ % - \frac{\lambda(2\lambda-1)}{4(\lambda-1)^2} M \bigg[g;\frac{2\lambda-1}{2\lambda-2}\bigg] a^{\frac{2\lambda-1}{2\lambda-2}} + o(a^{\frac{2\lambda-1}{2\lambda-2}})
&= \pi - \frac{2 \Gamma(\frac{2\lambda - 1}{2\lambda-2}) \Gamma(\frac{\lambda}{2\lambda - 2}) }{\sqrt{\pi}} a^{\frac{1}{2\lambda-2}} - \frac{\sqrt{\pi} \Gamma(\lambda) \tan(\lambda\pi)}{\Gamma(\lambda-\frac{1}{2})} a + O(a^{1 + \frac{1}{2\lambda-2}}).
\end{align*}
Note that
\begin{align*}
M[F;z] &= \int_0^1 f(s) \phi(s)^{z-1} \,ds = \int_0^1 \frac{(s^{2-2/\lambda}-s^2)^{z-\frac{3}{2}}}{(1-s^{2-2/\lambda})^{z-1}} \,ds \\
&= \frac{\lambda}{2} \int_0^1 \frac{w^{(\lambda-1)(z-1)-\frac{1}{2}} (1-w)^{z-\frac{3}{2}}}{(1-w^{\lambda-1})^{z-1}} \,dw.
\end{align*}
Therefore, for $\lambda\in (\frac{3}{2}, 2)$,
\[T_+ = \pi - \frac{2^{1 + \frac{1}{2\lambda - 2}} \lambda \Gamma(\frac{2\lambda - 1}{2\lambda-2}) \Gamma(\frac{\lambda}{2\lambda - 2})}{\sqrt{\pi}} |P|^{\frac{1}{2\lambda-2}} - \frac{2\sqrt{\pi} \lambda^{2\lambda - 2} \Gamma(\lambda) \tan(\lambda\pi)}{\Gamma(\lambda-\frac{1}{2})} |P| + O(|P|^{1 + \frac{1}{2\lambda-2}}).\]
For $\lambda = \frac{3}{2}$, we have a logarithmic singularity ($\lambda=3/2$ is known to be an interesting case, compare with the explicit homogeneous solutions known for this value of $\lambda$ in \cite{LS} and the Kaden spiral in \cite{K}):
\[T_+ = \pi - \frac{3}{4} a \log(a^{-1}) + O(a) = \pi - \frac{9}{4} |P| \log (|P|^{-1}) + O(|P|).\]

Now, we consider $T_-$. Let $b = x_-^{\frac{1}{\lambda}}$, then
\[T_- = 2b \int_0^1 \frac{ds}{\sqrt{\lambda^2 b^2 (1-s^2) + (1-s^{2-2/\lambda})}}.\]
Since $b^{2\lambda - 2} + \lambda^2 b^{2\lambda} = -2P$,
\[b = 2^{\frac{1}{2\lambda - 2}} |P|^{\frac{1}{2\lambda - 2}} - \lambda^2 2^{\frac{1}{\lambda - 1}} |P|^{\frac{3}{2\lambda - 2}} + o(|P|^{\frac{3}{2\lambda - 2}}).\]
Therefore,
\begin{align*}
T_- &= 2b \int_0^1 \frac{ds}{\sqrt{1-s^{2-2/\lambda}}} + O(b^3) \\
%&= 2^{1 + \frac{1}{2\lambda - 2}} |P|^{\frac{1}{2\lambda - 2}} \int_0^1 \frac{1}{\sqrt{1-s^{2-2/\lambda}}} \,ds + O(|P|^{\frac{3}{2\lambda - 2}}) \\
&= \frac{2^{1+\frac{1}{2\lambda-2}} \sqrt{\pi} \lambda \Gamma( \frac{\lambda}{2\lambda-2})}{\Gamma(\frac{1}{2\lambda-2})} |P|^{\frac{1}{2\lambda - 2}} + O(|P|^{\frac{3}{2\lambda - 2}}).
\end{align*}
We conclude that
\[\resizebox{\textwidth}{!}{$T = \begin{cases}
\displaystyle \pi \underbrace{- \frac{4 \lambda^{2\lambda - 2} \sqrt{\pi} \Gamma(\lambda) \tan(\pi\lambda)}{\Gamma(\lambda - \frac{1}{2})}}_{<\,0} |P| + o(|P|) & \quad \text{if}~ \lambda\in (1,\frac{3}{2}), \\ \\
\displaystyle \pi - \frac{9}{4} |P| \log(|P|^{-1}) + O(|P|) & \quad \text{if}~ \lambda = \frac{3}{2}, \\ \\
\displaystyle \pi \underbrace{-\frac{2^{1+\frac{1}{2\lambda-2}} \lambda \Gamma( \frac{2\lambda-1}{2\lambda-2} ) \Gamma( \frac{\lambda}{2\lambda-2})}{\sqrt{\pi}} \bigg( 1 - \sin \Big(\frac{\pi}{2\lambda - 2} \Big) \bigg)}_{<\,0} |P|^{\frac{1}{2\lambda-2}}+ O(|P|) & \quad \text{if}~ \lambda \in (\frac{3}{2}, 2).
\end{cases}$}\]
%\[ \underbrace{- \frac{2\sqrt{\pi} \lambda^{2\lambda - 2} \Gamma(\lambda) \tan(\lambda\pi)}{\Gamma(\lambda-\frac{1}{2})}}_{>\,0} |P| + o(|P|).\]
In any case, $T = T_+ + T_-$ is a decreasing function of $-P$ near $P = 0$. Furthermore, in the case $1 < \lambda \leq \frac{3}{2}$, as $-P$ increases near $0$, $T_+$ decreases at a rate of
$$\begin{cases} |P| & ~\text{if}~ 1 < \lambda < \frac{3}{2}, \\ |P|\log(|P|^{-1}) & ~\text{if}~ \lambda = \frac{3}{2}, \end{cases}$$
while $T_-$ increases at a rate of $|P|^{\frac{1}{2\lambda - 2}}$. Therefore, for $1 < \lambda \leq \frac{3}{2}$ and any positive integer $k$, the sum $T_+ + k T_-$ decreases as $-P$ increases near $0$.

\section{Relation to the Homogeneous Shear Flow} \label{sec:SHEAR}
In this section, we focus on the two-sink solution $\psi(\theta)$, which is obtained by gluing two copies each of $\psi_+$ and $\psi_-$ in the sequence $\psi_+$, $-\psi_-$, $\psi_+$, $-\psi_-$. The asymptotic behavior of the two-sink solution $r^\lambda \psi(\theta)$ as $\lambda\to 2$ is of interest as a means of comparison with the power-law shear flow $\Omega(x,y) = |y|^{-\alpha}$ (where $\alpha = 2-\lambda$). In particular, it appears at first sight that the two-sink approaches the power-law shear flow as $\lambda\to 2$ %or $\lambda\to 3/2$
(equivalently $\alpha\to 0$). %or $\alpha \to 1/2$).

Indeed, from the asymptotic expansion in the previous section we obtain
\begin{align*}
T &= \pi -\frac{2^{1+\frac{1}{2\lambda-2}} \lambda \Gamma( \frac{2\lambda-1}{2\lambda-2} ) \Gamma( \frac{\lambda}{2\lambda-2})}{\sqrt{\pi}} \bigg( 1 - \sin \Big(\frac{\pi}{2\lambda - 2} \Big) \bigg) |P|^{\frac{1}{2\lambda-2}} \\
& \qquad + \frac{2\sqrt{\pi} \lambda^{2\lambda - 2} \Gamma(\lambda) \tan((2-\lambda)\pi)}{\Gamma(\lambda-\frac{1}{2})} |P| + O(|P|^{1 + \frac{1}{2\lambda - 2}}).
\end{align*}
So we may estimate the value of the pressure $P^*$ at which the $\pi$-periodic solution is found. By using only the first three terms in the asymptotic expansion above, we have that $P^*$ is approximately:
\[ |P^*|^{\frac{2\lambda - 3}{2\lambda - 2}} \approx \frac{2^{\frac{1}{2\lambda - 2}} \Gamma( \frac{2\lambda-1}{2\lambda-2} ) \Gamma( \frac{\lambda}{2\lambda-2}) \Gamma(\lambda - \frac{1}{2})}{\pi \lambda^{2\lambda - 3} \Gamma(\lambda)} \frac{1 - \cos(\frac{2-\lambda}{2\lambda - 2} \pi)}{\tan((2-\lambda)\pi)} \]
% \[\resizebox{\textwidth}{!}{$-\pi ^{\frac{1}{3-2 \lambda }-1} \lambda ^{-2 \lambda +\frac{1}{3-2 \lambda }+1}
%    \left(\frac{2^{\frac{1}{2 (\lambda -1)}} \lambda  \left(\sin \left(\frac{\pi }{2
%    \lambda -2}\right)-1\right) \cot (\pi  \lambda ) \Gamma \left(1+\frac{1}{2 (\lambda
%    -1)}\right) \Gamma \left(\lambda -\frac{1}{2}\right) \Gamma \left(\frac{\lambda }{2
%    \lambda -2}\right)}{\Gamma (\lambda )}\right)^{\frac{1}{2 \lambda -3}+1}$}\]
This approximation for the critical pressure vanishes at $\lambda=2$.
Taking a series expansion in $\lambda$ about $\lambda = 2$, we have the following asymptotic behavior for $P^*$ as $\lambda \to 2^-$:
\begin{equation} \label{eq:ASYM}
|P^*(\lambda)| \approx \frac{\pi^2}{512}(2-\lambda)^2 \approx 0.0193 \alpha^2.
\end{equation}

These asymptotics are also verified by numerically evaluating $T$ and finding $P^*$. We believe that rigorously verifying Equation \eqref{eq:ASYM} as $\lambda \to 2^-$, is an interesting problem in its own right that should be quite challenging due to a degeneracy in the asymptotic formulas as $\lambda$ approaches $2$. 
\begin{figure}[H]
    \centering
    \includegraphics[width=0.8\linewidth]{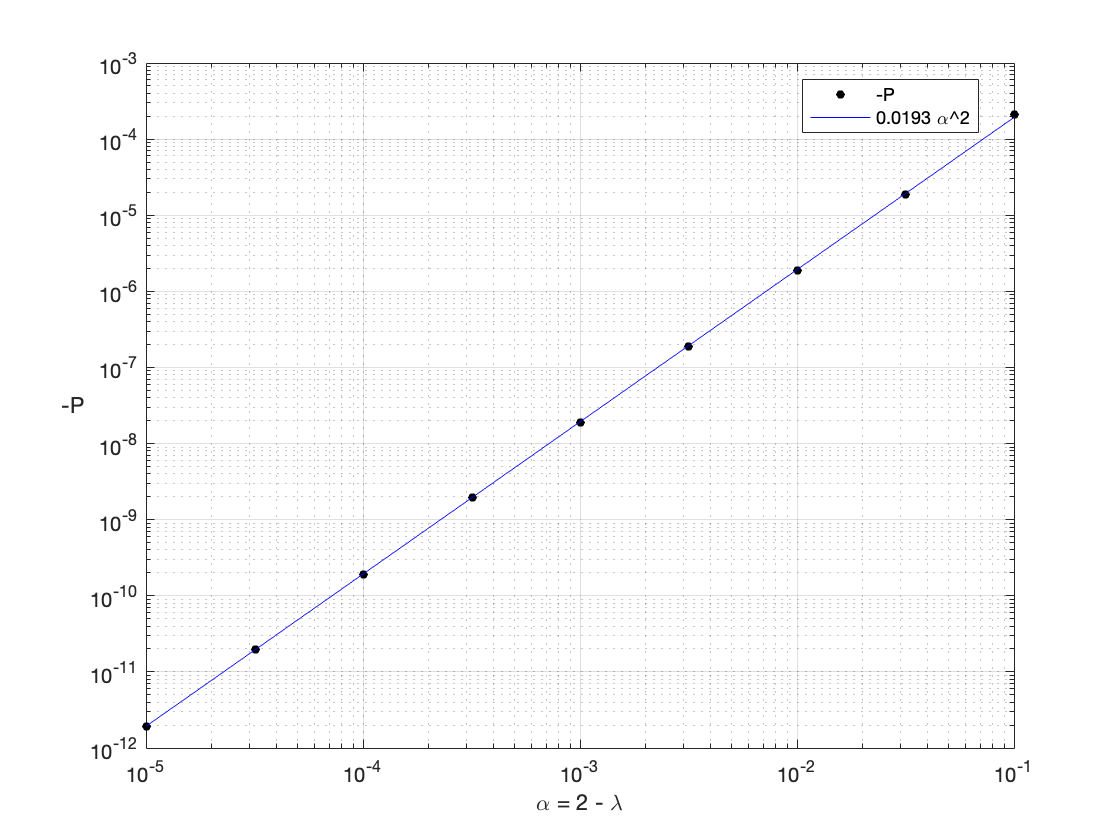}
    \caption{Log-log plot of $P^*$ by $\alpha$: data for this graph are located at Appendix C.}
    \label{fig:P2}
\end{figure}

% The asymptotic behavior as $\lambda\to 3/2$ is no longer algebraic, and $P^*$ is approximately:
% $$\frac{1}{3} 2^{\frac{2 (\lambda -1)}{2 \lambda -3}} \left(\lambda
%    -\frac{3}{2}\right)^{\frac{1}{2 \lambda -3}} e^{\frac{1}{2} \left(-2+\gamma -2 \pi -3
%    \psi ^{(0)}\left(\frac{3}{2}\right)\right)},$$
%    where $\gamma$ is the Euler-Mascharoni constant and $\psi^{(0)}(3/2)$ is the value of a certain polygamma special function.

\subsection{Convergence to the Shear Flow as Pressure Goes to Zero}
Since this "critical pressure" value approaches zero, it seems as if the two-sink solution should approach the power-law shear flow, which has identically zero pressure, as $\lambda\to 2$. 

To that end, we prove the following proposition, which shows convergence of the two-sink solution to the power-law shear flow at the velocity level when the critical pressure approaches zero.
\begin{proposition}
Let $\psi_P$ be a positive solution to the ordinary differential equation in Equation \eqref{eq:FIRSTORDER} with pressure $P<0$ on the interval $[0,T_+(P)]$ corresponding to the stream function of a two-sink solution. Let $\psi_S(x) = \lambda^{-\lambda}\sin^\lambda(\theta)$ be the explicit shear flow solution to Equation \eqref{eq:FIRSTORDER} on the interval $[0,\pi]$ (with zero pressure). Then we have
$$\lim_{P\to 0^-} \psi_{P} = \psi_S$$
in $C^{1,\gamma}$ on any compact subset of $[0,\pi)$ and any $\gamma < 1-1/\lambda$.
\end{proposition}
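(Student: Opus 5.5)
The plan is to work entirely with the first-order equation \eqref{eq:FIRSTORDER} with $B=1$, written as a quadrature. Let $x_+(P)$ be the maximum of $\psi_P$, attained at the midpoint $T_+(P)/2$. By the evenness of local solutions about the midpoint, it suffices to treat the increasing branch on $[0,T_+(P)/2]$. On that branch
\[
\theta_P(\psi) = \int_0^{\psi} \frac{dq}{\sqrt{-2P + q^{2-\frac{2}{\lambda}} - \lambda^2 q^2}}, \qquad 0\le \psi \le x_+(P),
\]
and $\psi_P$ is the inverse function of $\theta_P$. For $P=0$ the same formula, with upper limit $\lambda^{-\lambda}$, gives $\theta_S$, the inverse of $\psi_S$ on $[0,\pi/2]$. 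I will check directly that $\psi_S=\lambda^{-\lambda}\sin^\lambda\theta$ satisfies \eqref{eq:FIRSTORDER} with $P=0$. From the Lemma, $T_+(P)\to\pi$, and from the relation defining $x_+$ we get $x_+(P)\to\lambda^{-\lambda}$. Hence any compact $K\subset[0,\pi)$ lies in $[0,T_+(P)]$ once $|P|$ is small, so the statement makes sense.

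\textbf{Step 1 (uniform convergence).} Since $-2P>0$, the integrand for $\theta_P$ is pointwise at most the $P=0$ integrand. The latter is integrable: it behaves like $q^{-(1-1/\lambda)}$ near $0$ and like an inverse square root near $\lambda^{-\lambda}$. Dominated convergence then gives $\theta_P\to\theta_S$ uniformly on common domains. It also gives the comparison $\theta_P(\psi)\le\theta_S(\psi)$, which inverts to
\[
\psi_P(\theta)\ge\psi_S(\theta) \quad \text{on } [0,\min(T_+(P),\pi)/2].
\]
Inverting uniform convergence of strictly increasing functions yields $\psi_P\to\psi_S$ uniformly away from the top. Near the top, both functions lie within $o(1)$ of their maxima, and the maxima converge. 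Reflecting about $T_+(P)/2\to\pi/2$ then gives uniform convergence on $K$.

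\textbf{Step 2 ($C^1$ convergence).} By \eqref{eq:FIRSTORDER}, $(\psi_P')^2=-2P+G(\psi_P)$ with $G(q)=q^{2-2/\lambda}-\lambda^2q^2$ continuous. So $(\psi_P')^2\to(\psi_S')^2$ uniformly. The signs of $\psi_P'$ and $\psi_S'$ agree away from a shrinking neighbourhood of the midpoints, and inside that neighbourhood both derivatives are uniformly small. Hence $\psi_P'\to\psi_S'$ uniformly on $K$.

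\textbf{Step 3 (uniform Hölder bound and interpolation).} This is the main obstacle. Equation \eqref{eq:2.4} with $B=1$ reads
\[
\psi''=\tfrac{\lambda-1}{\lambda}\psi^{1-2/\lambda}-\lambda^2\psi,
\]
and it is singular at $\psi=0$, i.e.\ at $\theta=0$. The comparison from Step 1 gives $\psi_P(\theta)\ge\psi_S(\theta)\ge c\,\theta^\lambda$ near $\theta=0$, uniformly in $P$, and $\psi_P$ is bounded below by a positive constant on the rest of $K$. Therefore
\[
|\psi_P''(\theta)|\le C\,\theta^{\lambda-2} \quad \text{on } K,
\]
with $C$ independent of $P$. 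Integrating, $|\psi_P'(a)-\psi_P'(b)|\le C|a^{\lambda-1}-b^{\lambda-1}|\le C|a-b|^{\lambda-1}$. This is a uniform $C^{1,\lambda-1}(K)$ bound, and the same bound holds for $\psi_S$. Finally, interpolating between this uniform bound and the uniform $C^1$ convergence of Step 2 gives convergence in $C^{1,\gamma}(K)$ for every $\gamma<\lambda-1$. Since $1-1/\lambda=(\lambda-1)/\lambda<\lambda-1$, this covers every $\gamma<1-1/\lambda$. If the lower bound $\psi_P\ge c\,\theta^\lambda$ turned out to be delicate, I would instead estimate $\psi_P'$ via the quadrature together with the $C^{0,2-2/\lambda}$ regularity of $G$.
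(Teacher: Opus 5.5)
Your proof is correct, but it takes a genuinely different route from the paper's.

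The paper argues by compactness. It first gets a uniform $L^\infty$ bound on $\psi_P$ by evaluating the first-order equation at the maximum, then a uniform Lipschitz bound. Next it gets a uniform $C^{1,1-1/\lambda}$ bound, because $\sqrt{-2P+G(\psi_P)}$ (with your $G(q)=q^{2-2/\lambda}-\lambda^2q^2$) is $(1-1/\lambda)$-H\"older whenever $\psi_P$ is Lipschitz. Arzel\`a--Ascoli then gives a subsequential $C^{1,\gamma}$ limit $\psi_\infty$. This limit is identified by passing to $(\psi_\infty')^2=G(\psi_\infty)$, substituting $\psi_\infty=f^\lambda$, and solving $(f')^2+f^2=\lambda^{-2}$.

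You instead invert the explicit quadrature. The monotone comparison $\theta_P\le\theta_S$ (from $-2P>0$), together with dominated convergence, gives uniform convergence of the whole family with no subsequences, and it identifies the limit directly. This buys something real. The limiting problem $(\psi')^2=G(\psi)$, $\psi(0)=0$, $\psi\ge0$ is not uniquely solvable, because $\sqrt{G}$ is not Lipschitz at $0$. Each of the following solves it:
\begin{itemize}
\item the zero function;
\item the profile that vanishes on $[0,\theta_0]$ and equals $\lambda^{-\lambda}\sin^\lambda(\theta-\theta_0)$ afterwards;
\item profiles with a plateau at height $\lambda^{-\lambda}$.
\end{itemize}
So the paper's step ``it is clear that $f=\lambda^{-1}\sin\theta$'' (which also divides by $f^{2\lambda-2}$) tacitly needs extra information. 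For instance, it needs that the limit is nondecreasing on $[0,\pi/2]$ with $\psi_\infty(\pi/2)=\lambda^{-\lambda}$. Your comparison $\psi_P\ge\psi_S$ and the quadrature supply exactly this.

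Your Step 3 also gives a sharper uniform bound: $C^{1,\lambda-1}$ rather than $C^{1,1-1/\lambda}$. Here $\lambda-1$ is precisely the H\"older exponent of $\psi_S'$ at $\theta=0$. You therefore get convergence for every $\gamma<\lambda-1$, a strictly larger range than the one claimed. The price is that you use the singular second-order equation and need the uniform lower bound $\psi_P\ge c\,\theta^\lambda$, which your comparison provides for free. The paper's H\"older estimate uses only the first-order equation and needs no lower bound; it is essentially the fallback you mention at the end.
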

\begin{proof}
In this proof, we fix some $\lambda \in (1,2)$. In particular, when we say that a constant depends "only" on something else, we exclude from consideration the dependence on $\lambda$.

We recall that the positive function $\psi_P$ satisfies Equation \eqref{eq:FIRSTORDER}:
$$(\psi_P')^2 = -2P + \psi_P^{2-\frac{2}{\lambda}} - \lambda^2 \psi_P^2.$$
By the standard theory of ordinary differential equations, $\psi_P$ is smooth on $(0,T_+(P))$ and vanishes at the endpoints of the interval. 

Suppose first that there exists a sequence $P_n\to 0^-$ such that the solutions $\psi_n := \psi_{P_n}$ satisfy $\| \psi_n\|_{L^\infty}\to \infty$ as $n\to \infty$. Since $1<\lambda<2$ we know $2-2/\lambda<2$. Therefore, if $x_n$ is the number in the interval $[0,T_+(P_n)]$ at which the $L^\infty$ bound for each $\psi_n$ is saturated and if $\psi_n(x_n) := A_n$, we have
$$0\leq (\psi'_n)^2 = -2P_n +A_n^{2-2/\lambda}-\lambda^2 A_n^2 <0,$$
if we take $n$ large enough. This is a contradiction, so we may conclude that 
$$A:=\sup_{-1<P<0} \|\psi_P\|_{L^\infty} <\infty,$$
or equivalently that the solutions $\psi_P$ are uniformly bounded in $P$ near zero. Since $1<\lambda<2$, we know $2-2/\lambda>0$, so we immediately conclude that
$$\sup_{-1<P<0} \|\psi_P\|_{\text{Lip}} <L(A),$$
where $L(A)$ is a constant depending only on $A$. Equivalently, the family of functions $\psi_P$ have Lipschitz constant uniform in $P$ near zero. 

Since $\psi_P$ is Lipschitz, then $-2P + \psi_P^{2-\frac{2}{\lambda}} - \lambda^2 \psi_P^2$ is $C^{0,2-2/\lambda}$ H\"{o}lder regular, with H\"{o}lder norm depending only on $L(A)$, the uniform Lipschitz bound of $\psi_P$. In addition, $\pm \sqrt{-2P + \psi_P^{2-\frac{2}{\lambda}} - \lambda^2 \psi_P^2}$ is $C^{0,1-1/\lambda}$ H\"{o}lder regular, with H\"{o}lder norm depending only on $L(A)$, the uniform Lipschitz bound of $\psi_P$. We conclude that $\psi_P \in C^{1,1-1/\lambda}$, with H\"{o}lder norm depending solely on $A$, the uniform bound in $P$ on $\|\psi_P\|_{L^\infty}$.

By (a straightforward modification of) the Arzel\`{a}-Ascoli theorem, we conclude that there exists a sequence $P_m\to 0^-$ such that $\psi_m := \psi_{P_m}$ converges in $C^{1,\gamma}$ to some $C^{1,1-1/\lambda}$ function $\psi_\infty$ on any compact subset of $[0,\pi]$. If we prove that the limit is unique, taking any sequence $P\to 0^-$ makes $\psi_P$ converge to $\psi_\infty$. We also remark that since $\psi_P\geq 0$ for each $P$, we certainly have $\psi_\infty\geq 0$ on $[0,\pi$]. It remains to show that $\psi_\infty=\psi_S$.

By the convergence proven earlier, we know
$$(\psi_\infty')^2 = \psi_\infty^{2-\frac{2}{\lambda}} - \lambda^2 \psi_\infty^2.$$
Since $\psi_\infty\geq 0$, we may assume that $\psi_\infty = f^\lambda$ for some function $f\geq 0$. The ordinary differential equation satisfied by $f$ is precisely
$$(f')^2 + f^2 = \lambda^{-2}.$$
Since $f(0) = 0$ and $f\geq 0$, it is clear that $f(x)= \lambda^{-1}\sin(\theta)$. We conclude that $\psi_\infty=\psi_S$.
\end{proof}

\begin{remark}
    As noted earlier, the two-sink solution is more regular than the power-law shear flow, in a way that precludes analogous uniform convergence of the two-sink to the shear flow at the vorticity level.
\end{remark}

\section*{Appendix}
\renewcommand{\thesubsection}{\Alph{subsection}}
\renewcommand{\theequation}{A.\arabic{equation}}
\renewcommand{\thetheorem}{A.\arabic{theorem}}
\setcounter{equation}{0}
\setcounter{subsection}{0}
\setcounter{theorem}{0}
\subsection{The Mellin Transform and its Analytic Continuation}
Suppose $f:[0,\infty) \to \C$ is a function that for some $\alpha < \beta$,
\[f(t) = O(t^{-\alpha}) ~\text{as}~ t\to 0, \quad f(t) = O(t^{-\beta}) ~\text{as}~ t\to \infty.\]
Then, the Mellin transform $M[f;z]$ is defined on a strip $\alpha <\text{Re}\,z < \beta$ as
\[M[f;z] = \int_0^\infty t^{z-1} f(t) \,dt. \]
The Mellin transform $M[f;z]$ is a holomorphic function of $z$ on the strip $\alpha <\text{Re}\,z < \beta$. The inverse Mellin transform is given by
\[f(t) = \frac{1}{2\pi i} \int_{c-i\infty}^{c+i\infty} t^{-z} M[f;z] \,dz, \quad \alpha < c < \beta.\]
From this, it is possible to deduce the Parseval-Plancherel identity for the Mellin transform:
\[ \int_0^\infty f(t) g(t) \,dt = \frac{1}{2\pi i} \int_{c-i\infty}^{c+i\infty} M[f;z] M[g;1-z] \,dz. \]

Suppose $f(t) = \sum_{k=0}^\infty c_k t^{a_k}$ as $t\to 0^+$. Then, the Mellin transform $M[f;z]$ can be extended to a meromorphic function on a left half plane $\text{Re}\,z < \beta$ with simple poles at $z = -a_k$ for $k=1,2,\cdots$, and $\text{res}_{z=-a_k} M[f;z] = c_k$. Let $f(t) = \sum_{k=0}^{N-1} c_k t^{a_k} 1_{[0,1)}(t) + f_N(t)$. Then, on the strip $\alpha <\text{Re}\,z < \beta$, we get
\[M[f;z] = \sum_{k=0}^{N-1} c_k \int_0^1 t^{z+a_k-1} \,dt + M[f_N;z] = \sum_{k=0}^{N-1} \frac{c_k}{z+a_k} + M[f_N;z]. \]
The expression above is a meromorphic function on a strip $-a_N<\text{Re}\,z < \beta$ with simple poles at $z=-a_0,\cdots,-a_{N-1}$.

Similarly, if $f(t) = \sum_{k=0}^\infty d_k t^{-b_k}$ as $t\to\infty$. Then, the Mellin transform $M[f;z]$ can be extended to a meromorphic function on a right half plane $\text{Re}\,z > \alpha$ with simple poles at $z = b_k$ for $k=1,2,\cdots$, and $\text{res}_{z=b_k} M[f;z] = -d_k$.

\subsection{Asymptotic Expansions for Integrals}
Consider
\[I(\lambda) = \int_0^L f(t) g(\lambda t) \,dt\]
as $\lambda\to \infty$. From the Parseval-Plancherel identity for Mellin transform, we get
\[\int_0^L f(t) g(\lambda t) \,dt = \frac{1}{2\pi i} \int_{c-i\infty}^{c+i\infty} \lambda^{-z} M[f;1-z] M[g;z] \,dz.\]
Suppose
\[f(t) = \sum_{k=0}^\infty c_k t^{a_k} ~\text{as}~t\to 0^+, \quad g(t) = \sum_{l=0}^\infty d_l t^{-b_l} ~\text{as}~ t\to \infty. \]
If there is no $a_k$ and $b_l$ such that $a_k + 1 = b_l$, then simple poles of $M[f;1-z]$ and $M[g;z]$ does not overlap. By shifting the contour to right, we pick up residues and the following asymptotic expansion is obtained:
\[I(\lambda) = \sum_{k=0}^\infty c_k M[g;a_k+1] \lambda^{-a_k-1} + \sum_{l=0}^\infty d_l M[f;-b_l +1] \lambda^{-b_l}. \]
If $a_k + 1 = b_l = \gamma$ for some $k,l$, then $M[f;1-z] M[g,z]$ has a double pole at $z=-\gamma-1$. In this case, the corresponding term in the asymptotic expansion becomes
\[c_k d_l \lambda^{-\gamma} \log\lambda - \Big(\underset{z=-\gamma-1}{\text{res}} M[f;1-z] M[g,z] \Big) \lambda^{-\gamma}.\]
Now, consider
\[J(\lambda) = \int_0^L f(s) g(\lambda \phi(s)) \,ds\]
where $\phi:[0,L] \to [0,M]$ be strictly increasing bijection. Then, substituting $t = \phi(s)$ gives
\[J(\lambda) = \int_0^M \underbrace{\frac{f(s)}{\phi'(s)}}_{=\,F(t)} g(\lambda t) \,dt.\]
An asymptotic expansion can be obtained by the computation above, while
\[M[F;z] = \int_0^M t^{z-1} f(s) \frac{dt}{\phi'(s)} = \int_0^L \phi^{z-1}(s) f(s) \,ds. \]

\subsection{Numerical Data}

\begin{figure}[H]
{\setlength{\arraycolsep}{10pt}
\renewcommand{\arraystretch}{1.5}
\newcolumntype{?}{!{\vrule width 1pt}}
\[\begin{array}{?l|l|l?} \Xhline{1pt}
\lambda & - 2P^* & \alpha^{-2} P^* \\ \Xhline{1pt}
1.9 & 4.2333434 \cdot 10^{-4} & 0.0212 \\ \hline
1.96838 & 3.7657484 \cdot 10^{-5} & 0.0188 \\ \hline
1.99 & 3.7510429 \cdot 10^{-6} & 0.0188 \\ \hline
1.99684 & 3.7903849 \cdot 10^{-7} & 0.0190 \\ \hline
1.999 & 3.8277711 \cdot 10^{-8} & 0.0191 \\ \hline
1.99968 & 3.9359471 \cdot 10^{-9} & 0.0192 \\ \hline
1.9999 & 3.8507873 \cdot 10^{-10} & 0.0193 \\ \hline
1.999968 & 3.9461 \phantom{000} \cdot 10^{-11} & 0.0193 \\ \hline
1.99999 & 3.8547 \phantom{000} \cdot 10^{-12} & 0.0193 \\ \Xhline{1pt}
\end{array}\]}
\caption{Numerical investigation of $P^*$ for several values of $\lambda$ approaching $2$}
\end{figure}

Mathematica code for the generation of the vector plots in Figure \ref{fig:AFAR} and Figure \ref{fig:CLOSE} and Matlab code for the computation of the data above and the generation of Figure \ref{fig:P} and Figure \ref{fig:P2} may be found at the website of the second named author.

\begin{center}\texttt{https://web.math.princeton.edu/\textasciitilde mc3498/}.\end{center}

\end{document}